\documentclass[12pt, a4paper]{amsart}

\usepackage{amsmath,amssymb,amsthm}
\usepackage{hyperref}
\usepackage[dvipsnames,usenames]{color}

\setlength{\textwidth}{15cm}
\setlength{\oddsidemargin}{.5cm}
\setlength{\evensidemargin}{.5cm}

\newtheorem{guia}{}

\newtheorem{propo}[guia]{Proposition}
\newtheorem{teorema}[guia]{Theorem}

\newtheorem{lema}[guia]{Lemma}

\newcommand{\ds}{\displaystyle}

\newcommand{\e}{\varepsilon}

\newcommand{\R}{\mathbb R}

\newcommand{\re}[1]{(\ref{#1})}
\newcommand{\begeqa}{\begin{eqnarray}}
\newcommand{\eneqa}{\end{eqnarray}}
\newcommand{\begeqaet}{\begin{eqnarray*}}
\newcommand{\eneqaet}{\end{eqnarray*}}
\newcommand{\beeq}{\begin{equation}}
\newcommand{\eeq}{\end{equation}}
\newcommand{\beeqs}{\begin{equation*}}
\newcommand{\eeqs}{\end{equation*}}

\begin{document}

\title[Solvability of nonlinear elliptic equations]{Solvability of nonlinear elliptic equations with gradient terms}

\author[P. Felmer]
{ Patricio Felmer}
\author[A. Quaas]
{ Alexander Quaas}
\author[B. Sirakov]
{Boyan Sirakov }
\
\date{}
\address{P. Felmer \hfill\break\indent
Departamento de Ingenier\'{\i}a  Matem\'atica and
Centro de Modelamiento Matem\'a\-tico
UMR2071 CNRS-UChile,
Universidad de Chile\\
Casilla 170 Correo 3, Santiago, Chile.
} \email{{\tt pfelmer@dim.uchile.cl}}

\address{A. Quaas\hfill\break\indent
Departamento de Matem\'{a}tica, Universidad T\'ecnica Federico Santa Mar\'{\i}a
\hfill\break\indent  Casilla V-110, Avda. Espa\~na, 1680 --
Valpara\'{\i}so, CHILE.}
\email{{\tt  alexander.quaas@usm.cl}}

\address{B. Sirakov\hfill\break\indent
Pontificia Universidade Cat\'olica do Rio de Janeiro (PUC-Rio)\\
Departamento de Matem\'atica\\
Rua Marques de S\~{a}o Vicente, 225, G\'avea\\
Rio de Janeiro - RJ\\
CEP 22451-900, Brasil.
}\email{{\tt bsirakov@mat.puc-rio.br}}


\begin{abstract}

We study the solvability in the whole Euclidean space of coercive quasi-linear and fully nonlinear elliptic equations modeled on $\Delta u\pm g(|\nabla u|)= f(u)$, $u\ge0$, where $f$ and $g$ are increasing continuous functions. We give conditions on $f$ and $g$ which guarantee the availability or the absence of positive solutions of such equations in $\R^N$. Our results considerably improve the existing ones and are sharp or close to sharp in the model cases. In particular, we completely characterize the solvability of such equations when  $f$ and $g$ have  power growth at infinity. We also derive a solvability statement for coercive equations in general form.
\end{abstract}

\maketitle

\section{Introduction}
\setcounter{section}{1}
\setcounter{equation}{0}

A topical problem in the theory of second-order elliptic PDE is the availability  of positive solutions in the whole Euclidean space of equations depending nonlinearly both in the unknown function and its gradient, such as
\beeq\label{princ}
Q[u]= H(u, |\nabla u|)\quad \mbox{in }\quad \R^N,\; N\ge 2,\\
\eeq
where $Q$ is a linear, quasi-linear or fully nonlinear second order elliptic operator and $H$ is a continuous function. In this paper we consider the case when the right hand side in \re{princ} compares to sums or differences of  nonlinear functions in $u$ and $|\nabla u|$. The model cases for our study, for which all our results are new, are the equations
$$
\begin{array}{ll}
\Delta u= f(u)\pm g(|\nabla u|)& \mbox{in }\R^N,\\
\end{array}
\leqno (P_\pm)
$$
where
\beeq\label{condfg}
 f,g\in C([0,\infty))\;\mbox{ are increasing functions, with }\; f(0)=g(0)=0.
\eeq
 We deduce existence results for much more general nonlinearities and second order operators, by applying comparison principles. We will  consider autonomous equations only, for the sake of readability and conciseness.
 \medskip

Throughout the paper, an existence statement will mean that \re{princ} has a positive {\it classical} solution  such that $u(x)\to\infty$ as $|x|\to \infty$, whereas a non-existence statement will mean \re{princ} does not have even {\it weak} (viscosity) solutions, without any assumption on their behavior at infinity.
\medskip

The importance of equations like  $(P_\pm)$ has been known since the work by Lasry and Lions \cite{LL} where they studied solutions defined on a bounded domain, and exploding on the boundary of the domain. Through Bellman's programming principle the solution of \re{princ} can be viewed as the value function of a stochastic control process, and the boundary condition then means that the process is discouraged to leave the domain by setting an infinite cost on the boundary. Here we will consider the natural situation when there is no restriction on the process and it is allowed to evolve in the whole space, but the cost increases as we move away from the starting point. As is well-known, the viscosity solutions framework is adapted to the study of optimal control problems.

Following \cite{LL} there has been a huge number of works on explosive solutions in bounded domains, and it is outside the scope of this paper to even attempt to give a full bibliography. A good starting point to the literature could be the recent survey  \cite{Ra}. Another large set of references can be found in the recent work \cite{AGQ}. That paper contains the most general conditions to date on existence and non-existence for the particular problems  $(P_\pm)$ in bounded domains, and we will return to it below.

 On the other hand, to our knowledge there has only been a limited number of studies on gradient-dependent equations of the type \re{princ} in unbounded domains. Among these, in  \cite{FS} Farina and Serrin have performed a very complete and general study of such equations provided $H$ in \re{princ}  behaves like a product of a term in $u$ and a term in $|\nabla u|$, with power growth. Some partial results on nonexistence for  problems such as $(P_\pm)$ were obtained in the work of Filippucci, Pucci and Rigoli, where the gradient term is also assumed to have limited power growth. Existence and non-existence results for problems such as $\Delta u= \rho(x)f(u)\pm |\nabla u|^q$ with a weight $\rho(x)$ which is assumed to have suitable decay at infinity can be found in the works by Lair and Wood \cite{LW}, and  Ghergu, Niculescu and Radulescu \cite{GNR}.  For a deep and extensive study of the validity of various forms of the maximum principle for equations like \re{princ} we refer to the recent book by Pucci and Serrin \cite{PS}. The interested reader may consult these references for more context.

The most important conceptual novelty of our work is that we give a rather precise description of the way the \textit{interaction} between $f$ and $g$ influences the solvability of $(P_\pm)$. As a first simple consequence,
 we are able to completely characterize the solvability of $(P_\pm)$ in the case when $f$ and $g$ have power-growth behavior at infinity, a question which has been open for some time. Furthermore, since our results are independent of the specific form of $f$ and $g$, they easily apply to  more general nonlinearities than power functions. For instance, for $(P_+)$ we can completely characterize all power-log functions $f$ and $g$ for which this problem has a positive solution (we observe however that we fall short of full characterization for such functions in the case of $ (P_-)$).
\medskip

 Before starting our discussion on problems with gradient dependance, we recall the very classical result by  Keller and Osserman (see \cite{O}, \cite{Ke}, and  also \cite{Ra}, \cite{Fa}, \cite{DGGW}), which states that
the equation $(P_\pm)$ without a gradient term, that is,
\beeq\label{KOeq}
\Delta u = f(u)\quad \mbox{in }\;\R^N
\eeq
has a  positive solution  if and only if
\begin{equation}\label{KO}
\int_1^{\infty}\frac{ds}{\sqrt{F(s)}}=\infty,
\end{equation}
where $F(t)=\int_0^t f(s)ds$ is the primitive of $f$. Standard examples of such $f(t)$ are functions whose growth at infinity does not exceed that of $t^p$, $p\le1$, or of $t(\log t)^q, q\le 2$, or $t(\log t)^2 (\log\log t)^q, q\le2$, etc.
\medskip

We now move to the problem $(P_+)$. As it is  easy to see (and will be checked below), the problem without explicit $u$-dependence
\beeq\label{gKOeq}
\Delta u = g(|\nabla u|)\quad \mbox{in }\;\R^N
\eeq
 has a positive solution if and only if
\begin{equation}\label{gKO}
\int_1^{\infty}\frac{ds}{g(s)}=\infty.
\end{equation}
Examples of such $g(t)$ include functions which grow at infinity at most as  $t^p$, $p\le1$, or $t(\log t)^q, q\le 1$, etc.

Roughly speaking, the point of conditions \re{KO} and \re{gKO} is that they describe quantitatively how quickly at most $f(s)$ or $g(s)$ can grow as $s\to \infty$ so the nonlinear terms in \re{KOeq} and \re{gKOeq} do not force the second derivative to become so large that a  blow-up at a finite point occurs. Note that these equations are {\it coercive}, in the sense that an increase in the unknown function or its derivative leads to an increase in the second order operator.

It is an obvious, and yet still open, question whether \re{KO} and \re{gKO} are necessary and sufficient for the full problem $(P_+)$ to have a positive solution, or on the contrary, the terms in $u$ and $|\nabla u|$ can somehow cooperate in order to prevent solutions from existing globally, while \re{KO} and \re{gKO} are both satisfied. Our first theorem gives evidence against the latter, and in particular implies that it does not occur for any standard choice of $f$ and $g$ satisfying \re{KO} and \re{gKO}.

\begin{teorema}\label{teo32}Let $f$ and $g$ be functions satisfying \re{condfg}. Then

{\rm (i)} If at least one of the assumptions \re{KO} and \re{gKO} is not satisfied then
any nonnegative subsolution to $(P_+)$ vanishes identically.

{\rm (ii)} If \re{KO} and \re{gKO} are satisfied, and, in addition, there are numbers  $A_0, \epsilon_0>0$  such that for all $A\ge A_0$ either
\beeq\label{maincond}\limsup_{s\to +\infty}\frac{\ g(A\sqrt{F(s)})}{A^2 f(s)}<\frac{1}{2}-\epsilon_0 \qquad \mbox{or}\qquad
\liminf_{s\to +\infty}\frac{ g(A\sqrt{F(s)})}{A^2 f(s)}> \frac{1}{2}+\epsilon_0,
\eeq
then $(P_+)$ admits at least one positive solution.
\end{teorema}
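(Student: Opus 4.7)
I would treat the two parts separately.

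\emph{Part (i).} If \re{KO} fails, then since $g\geq 0$ any nonnegative subsolution of $(P_+)$ satisfies $\Delta u \geq f(u)$, and the classical Keller--Osserman theorem recalled above forces $u\equiv 0$. When instead \re{gKO} fails, the reduction $\Delta u \geq g(|\nabla u|)$ alone is insufficient (constants are subsolutions), so one must retain both nonlinearities. For an arbitrary $x_0\in\R^N$ and an arbitrarily small $V_0>0$, construct a radial supersolution $V$ of $(P_+)$ on some ball $B_{R(V_0)}(x_0)$ with $V(x_0)=V_0$, $V'(0)=0$, $V\to+\infty$ on $\partial B_{R(V_0)}$, by solving the Bernoulli-type ODE
\begin{equation*}
\phi'(r)+\tfrac{N-1}{r}\phi(r)=g(\phi(r))+f(V_0),\qquad \phi(0)=0,
\end{equation*}
for $\phi=V'$, and setting $V(r)=V_0+\int_0^r\phi$. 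The forcing $f(V_0)>0$ pushes $\phi$ off zero, and the failure of \re{gKO} ($\int^\infty ds/g(s)<\infty$) forces finite-distance blow-up of $\phi$. Since $V\geq V_0$ and $f$ is increasing, $V$ is a supersolution of $(P_+)$; comparison yields $u(x_0)\leq V_0$, and letting $V_0\to 0$ gives $u(x_0)=0$.

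\emph{Part (ii).} A positive radial solution of $(P_+)$ is produced as a limit of Dirichlet problems on expanding balls. The essential input is a global radial $C^2$ supersolution $\overline U$ of $(P_+)$ with $\overline U(0)>0$, $\overline U'(0)=0$, $\overline U(r)\to\infty$. If the limsup alternative in \re{maincond} holds, take $\overline U$ to be the Keller--Osserman solution of $\Delta\overline U=f(\overline U)$, which is globally defined by \re{KO} and is automatically a supersolution of $(P_+)$ since $g\geq 0$. If the liminf alternative holds, use the ansatz $\overline U'(r)=A\sqrt{F(\overline U(r))}$ (smoothly patched near $r=0$ so that $\overline U'(0)=0$); then $\overline U''=(A^2/2)f(\overline U)$, and the supersolution inequality reduces to
\begin{equation*}
g\bigl(A\sqrt{F(\overline U)}\bigr)\geq \Bigl(\tfrac{A^2}{2}-1\Bigr)f(\overline U)+\tfrac{N-1}{r}A\sqrt{F(\overline U)}.
\end{equation*}
The liminf bound gives $g(A\sqrt F)\geq (A^2/2+\epsilon_0 A^2)f$ for $\overline U$ large, and the $\epsilon_0 A^2 f$ surplus absorbs the $-f$ term and the drift term once $A$ is taken sufficiently large.

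With $\overline U$ in hand, solve on each ball $B_R$ the Dirichlet problem $\Delta u_R=f(u_R)+g(|\nabla u_R|)$ with boundary datum $\overline U(R)$ by the sub--supersolution method between $\underline u\equiv 0$ and $\overline U$. Interior gradient bounds of Ladyzhenskaya--Ural'tseva / Lieberman type (the coercivity of $g$ is crucial) together with Schauder estimates produce uniform $C^{2,\alpha}_{\mathrm{loc}}$ bounds, yielding a subsequential limit $u$ in $\R^N$ with $0\leq u\leq\overline U$. Placing a strictly positive radial subsolution below $u_R$ (for instance a Keller--Osserman solution of $\Delta\underline u = Cf(\underline u)$ for a sufficiently large $C$) prevents the limit from collapsing to zero and yields $u(x)\to\infty$ as $|x|\to\infty$.

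\emph{Main obstacle.} The delicate ingredient is the liminf-case supersolution: absorbing the drift $\tfrac{N-1}{r}A\sqrt F$ into the $\epsilon_0 A^2 f$ margin requires a quantitative relation between $\sqrt{F(\overline U(r))}/r$ and $f(\overline U(r))$ along the ansatz, and hence a careful joint choice of $A$ and the matching radius with the near-origin piece. The uniform gradient estimates for the approximating Dirichlet problems and the non-degeneracy of the limit are the remaining technical points.
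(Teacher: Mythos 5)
Your part (i) contains a genuine gap in the case where \re{gKO} fails. The supersolution $V$ built from the ODE for $\phi=V'$ does have $\phi\to\infty$ at some finite radius $R$, but this does \emph{not} imply that $V(r)=V_0+\int_0^r\phi\to\infty$ as $r\to R$: the paper's Lemma~\ref{t1}, relation \re{condexp}, shows that the solution tends to infinity if and only if $\int^\infty \frac{s}{g(s)}\,ds=\infty$, and when this integral is finite one has $V'\to\infty$ while $V$ stays bounded (the graph ends at finite height with a vertical tangent). In that regime your comparison argument collapses, because $V$ need not dominate $u$ near $\partial B_R$ and cannot be extended past $R$ as a classical supersolution. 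This is exactly the hard case, and the paper resolves it through a delicate viscosity-solution argument (Proposition~\ref{pro}): one extends past $r=R$ with conical test functions $\varphi_m$, exploits Lemma~\ref{analysis} to get $g(m)/m\to\infty$, and produces a contradiction at an interior touching point. None of this appears in your proposal, so as written the $\re{gKO}$-fails branch does not go through. (Your treatment of the case where \re{KO} fails is fine, and is also what the paper's Lemma~\ref{t1} uses, via the bound $u''\ge N^{-1}f(u)$.)

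Part (ii) diverges substantially from the paper's proof, and the divergence hides the role of \re{maincond}. The paper argues directly on the Cauchy problem \re{cauchy3}: assuming the maximal existence interval $(0,R)$ is finite, it shows $u'\to\infty$, then splits according to the behaviour of $A(r)=r^{N-1}u'/\sqrt{F(u)}$. Boundedness of $A$ contradicts \re{KO}; $A\to\infty$ contradicts \re{gKO} via a change of variables through $S^{-1}$; and the oscillating case is where \re{maincond} is invoked, by computing the sign of $A'$ and showing oscillation is impossible. In your proposal the ``limsup'' alternative is handled by taking the Keller--Osserman solution of $\Delta\overline U=f(\overline U)$ as a global supersolution -- but this works whenever \re{KO} holds, with no reference to \re{maincond} at all, so if your scheme were correct the limsup hypothesis would be vacuous. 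This is a strong signal that a global supersolution alone cannot deliver the result: solvability of the Dirichlet problems on balls is not guaranteed by sub/supersolution bookkeeping, precisely because interior gradient blow-up of the radial ODE solution (the phenomenon \re{maincond} is designed to rule out) can obstruct existence on $B_R$. Moreover, your proposed lower barrier $\underline u$ with $\Delta\underline u=Cf(\underline u)$ is not in general a subsolution of $(P_+)$: one would need $(C-1)f(\underline u)\ge g(|\nabla\underline u|)$, which is unjustified. Finally, you yourself flag that absorbing the drift term in the liminf ansatz requires $\sqrt{F(\overline U(r))}/f(\overline U(r))\lesssim r$ along the solution curve, which is not established. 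The paper's argument avoids all of these issues by never leaving the ODE.
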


A discussion of condition \re{maincond} is in order. This hypothesis essentially requires that the functions $g\circ \sqrt{F}$ and $f$  be comparable for large values of their argument. Obviously if \re{KO} and \re{gKO} are true then \re{maincond} can fail only for handcrafted examples of $f$ and $g$, in which $f$ is built dependently on $g$ or vice versa. On the contrary, \re{maincond} is verified for any standard examples of functions $f$ and $g$ which satisfy \re{KO} and \re{gKO}. Roughly speaking, if $g\circ \sqrt{F}$ grows no faster than $f$ at infinity, then either the limsup in \re{maincond} is zero, or it grows at most as  $g(A)/A^2$; but $g(A)$ grows more slowly than $A^2$ as $A\to\infty$, thanks to \re{gKO}; while if $g\circ \sqrt{F}$ grows strictly faster than $f$ at infinity, then the liminf in \re{maincond} is infinite. Note that $g(ab)\le C g(a)g(b)$ as $a,b\to \infty$  is  verified by all standard examples for~\re{gKO}.

As a strightforward application of Theorem \ref{teo32} and the preceding remark we see that in the simplest case when $f$ and $g$ have power growth at infinity, we can give a complete picture of the solvability of $(P_+)$. Specifically,
 if $f(s)$ behaves like $s^p$ and $g(s)$ behaves like $s^q$ as $s\to \infty$, for some $p,q>0$  then a necessary and sufficient condition for  $(P_+)$ to have  a positive solution is
 \begin{equation}\label{condi1}
 \max\{ p,q \} \le 1.
 \end{equation}
 Similarly, and more generally, if $f$ and $g$  grow at infinity like $s^p(\log s)^\alpha$, $s^q(\log s)^\beta$, it is an easy exercise to plug these functions in \re{KO}, \re{gKO} and \re{maincond}, and completely characterize the solvability of $(P_+)$ in terms of $p,q,\alpha,\beta$, etc.
\medskip

\noindent {\it Remark 1}. As we shall see later, it is not difficult to remove \re{maincond} in the particular cases when $g$ has at most linear growth at infinity or $f$ does not grow faster than $g$ at infinity, in the following sense
\beeq\label{supplcond}
\limsup_{s\to\infty} \frac{g(s)}{s}<\infty \qquad \mbox{or}\qquad \limsup_{s\to\infty} \frac{f(as)}{g(s)}<\infty\quad \forall\:a>0.
\eeq
\medskip

 Next we turn to the problem $(P_-)$. We introduce the following function
\begin{equation}\label{def-Gamma}
\Gamma(s)=\int_{0}^{2s}g(t)\,dt+2Ns^2.
\end{equation}
In what follows we will actually be interested in the behavior at infinity of the inverse function of $\Gamma$, which by \re{def-Gamma} grows like $\sqrt{s}$ if $g$ has at most linear growth, while $\Gamma^{-1}(s)$ has strictly slower growth than $\sqrt{s}$ if $g$ is superlinear at infinity.

\begin{teorema}\label{teo22}
Let $f$ and $g$ be functions satisfying \re{condfg}. Then

{\rm (i)} If \begin{equation}\label{intro-cond2}
\int_1^{\infty}\frac{ds}{\Gamma^{-1}\big(F(s)\big)}<\infty,
\end{equation}
 then any nonnegative subsolution to $(P_-)$ vanishes identically.

{\rm (ii)} If
\begin{equation}\label{intro-cond1}
\int_1^{\infty}\frac{ds}{\sqrt{F(s)}}=\infty\qquad\mbox{or}\qquad
\int_1^{\infty}\frac{ds}{g^{-1}\big(f(s)\big)}=\infty,
\end{equation}
then $(P_-)$ admits at least one positive solution.

\end{teorema}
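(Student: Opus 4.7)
The plan is to treat parts (i) and (ii) independently, both via radial ODE constructions paired with comparison and regularity.

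For part (i) I would adopt an Osserman-type barrier strategy. For each $a > 0$ let $V_a$ denote the maximal radial solution of
\[
V'' + \frac{N-1}{r}V' + g(V') = f(V),\qquad V(0)=a,\ V'(0)=0,
\]
on its maximal existence interval $[0,R(a))$. Assuming one can show $V_a \to +\infty$ as $r \to R(a)^-$ and $R(a) \to 0$ as $a \to +\infty$, then for each $R < R^\star := \sup_a R(a)$ there exists $a_R$ with $R(a_R) = R$, and $V_{a_R}$ is a radial supersolution of $(P_-)$ on $B_R$ blowing up on the boundary. The comparison principle for $(P_-)$, available thanks to the monotonicity of the zeroth-order term and the structure of the equation, gives $\und \leq V_{a_R}$ on $B_R$, hence $\und(0) \leq a_R$; letting $R \to R^\star$ so that $a_R \to 0$ forces $\und(0) = 0$, and translating the origin to an arbitrary point shows $\und \equiv 0$.

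The key technical step in part (i) is a one-sided Bernstein-type estimate $V_a'(r) \leq C\,\Gamma^{-1}(F(V_a(r)))$ for $V_a$ large. To derive it I would multiply the radial ODE by $2V_a' \geq 0$ and integrate from $0$ to $r$: the cumulative dissipation $\int_0^r 2V_a'\,g(V_a')\,ds$ can be compared, via the monotonicity of $g$, to $G(2V_a'(r))$, while the spherical contribution $\int_0^r \tfrac{2(N-1)}{s}(V_a')^2\,ds$ feeds into the $2N(V_a')^2$-piece of $\Gamma$. Summing one obtains $\Gamma(V_a'(r)) \leq C F(V_a(r))$, which upon separation of variables yields
\[
R(a) \asymp \int_a^{\infty}\frac{ds}{\Gamma^{-1}(F(s))}.
\]
The assumption \re{intro-cond2} then translates into $R(a) \to 0$ as $a \to +\infty$, with $R^\star$ equal (up to constants) to the total integral $\int_0^{\infty}ds/\Gamma^{-1}(F(s))$. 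The specific form \re{def-Gamma} of $\Gamma$ is engineered precisely so that this Bernstein calculus closes; matching the two dissipation mechanisms against $\Gamma$ is the main obstacle.

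For part (ii) I would treat the two alternatives in \re{intro-cond1} separately. If $\int^{\infty} ds/\sqrt{F(s)} = +\infty$, the classical Keller--Osserman theorem provides an entire positive radial solution $u_0$ of $\Delta u = f(u)$ with $u_0(r) \to +\infty$; since $g \geq 0$, this $u_0$ is automatically a subsolution of $(P_-)$. If instead $\int^{\infty} ds/g^{-1}(f(s)) = +\infty$, I would take $\und$ to be the radial function determined by the first-order ODE $\und'(r) = g^{-1}(f(\und(r)))$, $\und(0) = 1$; the divergence assumption is exactly what ensures $\und$ extends globally with $\und \to +\infty$, and since $\und'$ is an increasing function of $\und$ (hence of $r$), the function $\und$ is convex, so
\[
\Delta \und = \und'' + \frac{N-1}{r}\und' \geq 0 = f(\und) - g(\und'),
\]
i.e.\ $\und$ is a subsolution of $(P_-)$. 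With such a radial subsolution in hand, I would solve the Dirichlet problem for $(P_-)$ on each ball $B_R$ with boundary data $\und|_{\partial B_R}$; any constant $M \geq \und(R)$ is a supersolution (as $\Delta M - f(M) + g(0) = -f(M) \leq 0$), so standard monotone iteration between $\und$ and $M$ produces a classical radial solution $u_R$ with $\und \leq u_R \leq M$. Coercivity of $(P_-)$ in the gradient gives interior Lipschitz estimates uniform in $R$, and elliptic regularity bootstraps to $C^{2,\alpha}$ bounds on compact sets; a diagonal extraction produces an entire positive solution $u$ of $(P_-)$ with $u \geq \und$, so $u(x) \to +\infty$ as $|x| \to +\infty$. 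A secondary issue is to have the comparison principle and local Lipschitz theory available in the admissible class of viscosity solutions, which is by now standard for this family of equations.
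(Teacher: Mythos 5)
Your Part (ii) is essentially correct in spirit but considerably heavier than necessary: the paper simply takes the radial ODE solution and shows it cannot blow up in finite time, using $u''\ge 0$ (Proposition \ref{teo6}) to get both $u'\le g^{-1}(f(u))$ and $u'\le\sqrt{2F(u)}$, then integrating; either divergence in \re{intro-cond1} then keeps $u(r)$ finite for every finite $r$, and the radial profile is already the desired solution. Your sub/supersolution construction is fine, but ``standard monotone iteration'' is not directly available for a right-hand side involving $-g(|\nabla u|)$, which is not quasi-monotone in the gradient; you would need Perron or a fixed-point argument on each ball, and then additional work to ensure the extracted entire solution is positive and tends to infinity. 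The direct ODE route avoids all of this.

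Part (i) has a genuine gap, and the direction of the key inequality is the problem. Multiplying the ODE by $2V_a'$ and integrating gives (after the manipulations you sketch) an inequality of the form $\Gamma(V_a'(r))\le C\,F(V_a(r))$, i.e.\ $V_a'\le\Gamma^{-1}\big(CF(V_a)\big)$, and separation of variables then yields $r\ge c\int_a^{V_a(r)}ds/\Gamma^{-1}(F(s))$, which is a \emph{lower} bound on $R(a)$. What your argument requires is the opposite: that $R(a)$ is finite and $R(a)\to 0$ as $a\to\infty$, which needs a \emph{lower} bound on $V_a'$. The energy identity goes the wrong way for that, since both the dissipation $\int 2V_a' g(V_a')$ and the spherical contribution subtract from $F(V_a)$; nothing in the identity prevents $V_a'$ from being much smaller than $\Gamma^{-1}(F(V_a))$, nor does it show blow-up at all. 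The paper obtains the needed upper bound by an entirely different mechanism: an explicit blow-up \emph{supersolution} $\bar u(r)=\phi(R^2-r^2)$ with $\phi$ defined by $\int_{\phi(t)}^\infty ds/\Gamma^{-1}(F(s))=t$ (Lemma \ref{kwak}) --- the specific form of $\Gamma$ in \re{def-Gamma} is engineered to make \emph{that} verification close, not a Bernstein estimate on the actual ODE solution. Moreover, that supersolution is constructed only for \emph{large} $\bar u_0$ (the verification uses $\phi(t)\le\tfrac12|\phi'(t)|$ near $t=0$), so one cannot simply send $a_R\to 0$ as your final step demands. The paper circumvents this with a two-step comparison: first compare $\und$ against the globally defined ODE solution with datum $\und(0)/2$ to show $\und$ is unbounded along a sequence, then translate the origin to a point where $\und$ exceeds $\bar u_0$ and compare there against $\bar u$. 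Without the supersolution and without this two-step reduction, your outline for Part (i) cannot be completed.
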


 Theorem \ref{teo22} gives a complete picture of the solvability of  $(P_-)$ in the case of nonlinearities with power growth at infinity. Specifically, if  $f(s)$ behaves like $s^p$ and $g(s)$ behaves like $s^q$ as $s\to \infty$, for some $p,q>0$, then the equation $(P_-)$ has a positive solution in the whole space if and only if
  \begin{equation}\label{condi2}
p \le \max\{ 1,q \} .
 \end{equation}

 However, contrary to Theorem \ref{teo32}, in the framework of Theorem \ref{teo22} we can find examples of standard functions for  which none of the conditions in Theorem \ref{teo22} is satisfied, for instance $f(s) = s(\log s)^\alpha$, $g(s)  = s(\log s)^\beta$, with $\beta>1$ and $\alpha\in (\beta+1,\beta+2)$. We do not know if $(P_-)$ has a positive solution in these cases.
 \medskip

Let us now give some more context for Theorems \ref{teo32} and \ref{teo22}. It is an often observed feature in elliptic PDE that positive solutions in bounded domains exist in the cases when entire solutions do not exist, and vice versa. This property turns out to be verified for $(P_-)$, in the sense that Theorem \ref{teo22} contains a dual statement to the one in Theorem 1 of \cite{AGQ}. In fact, the proof of our Theorem \ref{teo22} uses a number of ideas to be found in \cite{AGQ} (and even earlier in \cite{FMQ}), together with some improvements which permit to us to remove the extra assumptions (a) and (b) in \cite{AGQ}.

On the other hand, Theorem \ref{teo32} contains a completely new statement. In particular, \re{maincond} has not appeared before, and it represents a significant improvement over the previously available results, such as, for instance, Theorem 2 in \cite{AGQ}. The existence statement in Theorem \ref{teo32} is proved through delicate analysis of the asymptotics of the ODE associated to $(P_-)$, while the nonexistence proof makes use of the definition and properties of viscosity solutions of PDE.
\medskip

Finally, an important advantage of our methods is that they readily give existence  and non-existence results for  general equations in the form \re{princ}.  Let $\mathcal{M}^+$ denote the Pucci extremal operator, and consider the equation
\begin{equation}\label{geneq1}
\begin{array}{ll}
\mathcal{M}^+(D^2 u) = H(u, |\nabla u|) & \mbox{in }\R^N,\\
\end{array}
\end{equation}
where $H=H(u,p)$ is  continuous on $[0,\infty)^2$, $H(u,0)>0$ for $u>0$, and $H$ satisfies at least one of the following inequalities
\begin{equation}\label{gencond}
f_1(u) \pm g_1(p)\le H(u,p)\le f_2(u) \pm g_2(p),
\end{equation}
for some $f_i,g_i,i=1,2$, which satisfy \re{condfg}, and all $u,p\in \mathbb{R}^+$.

Then we have the following generalizations of Theorem~\ref{teo32} and Theorem~\ref{teo22}.

\begin{teorema}\label{genteo1} Under the assumptions of Theorem~\ref{teo32} (i) (resp. Theorem~\ref{teo22} (i)), the inequality
$$
\mathcal{M}^+(D^2 u) \ge f(u) + g(|\nabla u|)\qquad (\mbox{resp. }\; \mathcal{M}^+(D^2 u) \ge f(u) - g(|\nabla u|))
$$
does not have nontrivial nonnegative solutions in the whole space.

\end{teorema}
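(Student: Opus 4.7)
The plan is to extend the nonexistence proofs of Theorems~\ref{teo32}(i) and~\ref{teo22}(i) from $\Delta$ to the Pucci maximal operator $\mathcal{M}^+$. Those arguments rely on the Laplacian only through (a) uniform ellipticity, which supplies the maximum principle, Hopf boundary lemma and ABP estimate; and (b) the explicit form of the operator on radial convex increasing barriers. Both features are available for $\mathcal{M}^+$: it is uniformly elliptic, so the viscosity versions of the maximum principles apply, and on a smooth radial function $V(x)=\psi(|x|)$ with $\psi',\psi''\ge 0$ the eigenvalues of $D^2V$ are $\psi''(r)$ and $\psi'(r)/r$ (the latter with multiplicity $N-1$), all nonnegative, so
\[
\mathcal{M}^+(D^2 V)(x) \;=\; \Lambda\psi''(r)+\Lambda(N-1)\frac{\psi'(r)}{r} \;=\; \Lambda\,\Delta V(x).
\]

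First I would pick a point $x_0\in\R^N$ at which the nontrivial nonnegative subsolution $u$ is strictly positive, and translate so that $x_0=0$. The core step is then a barrier construction: one looks for a radial increasing convex function $\psi(r)$, defined on a ball $B_{R_*}\subset\R^N$ with $R_*<\infty$, satisfying
\[
\Lambda\psi''(r)+\Lambda(N-1)\frac{\psi'(r)}{r}=f(\psi)\pm g(|\psi'|),\qquad \psi(0)=\eta,\;\;\psi'(0)=0,
\]
with $\psi(r)\to\infty$ as $r\to R_*$. After dividing through by $\Lambda$, this is exactly the radial ODE treated in the proofs of Theorems~\ref{teo32}(i) and~\ref{teo22}(i) with $f,g$ replaced by $f/\Lambda,g/\Lambda$. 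The point is that the Keller--Osserman-type integral conditions \re{KO}, \re{gKO} and \re{intro-cond2} (with the natural analogue of the function $\Gamma$ from \re{def-Gamma} adapted to the Pucci radial operator) retain the same qualitative convergence behaviour under this $\Lambda$-rescaling, so the assumed failure of the relevant condition for $f,g$ continues to yield, via the same ODE / phase-plane analysis as in the Laplacian case, a finite-blow-up radial solution.

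To close the argument, I would combine the radial barrier with ABP and viscosity comparison, exactly as in the original proofs, to force $u$ to blow up within distance $R_*$ of the origin, contradicting its finiteness on $\R^N$. The main point to verify is that the integration-by-parts / divergence-theorem computations of the Laplacian proofs can be replaced by their Pucci viscosity counterparts (ABP estimate and Hopf lemma for $\mathcal{M}^+$), all of which are standard tools in the viscosity theory of fully nonlinear elliptic equations. I expect the case associated with Theorem~\ref{teo22}(i) to be the more delicate one, since the exact form of the function $\Gamma$ in \re{def-Gamma} depends on the operator through the constant multiplying $s^2$ and must be suitably modified for $\mathcal{M}^+$; the main obstacle will be checking that this modification preserves the sharpness of the integral condition, but this is a direct computation rather than a conceptual difficulty.
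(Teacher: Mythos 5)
Your proposal is correct and takes essentially the same route as the paper. The decisive observation, which you identify, is that on a radial, increasing, convex function $v(x)=\psi(|x|)$ all eigenvalues of $D^2 v$ are nonnegative, so $\mathcal{M}^+(D^2 v)=\Lambda\Delta v$ (the paper simply normalizes $\Lambda=1$, which rescales $H$ by $1/\Lambda$ and has no effect on the Keller--Osserman-type integral conditions); the radial barriers built in the Laplacian case are then automatically super-solutions for $\mathcal{M}^+$, and the uniform ellipticity of $\mathcal{M}^+$ lets the viscosity comparison principle of Proposition~\ref{Lemmacomparison} finish the job. Two small corrections to your account of the details. First, for the $f(u)+g(|\nabla u|)$ case the paper states and proves its nonexistence result, Proposition~\ref{pro}, \emph{directly} for $\mathcal{M}^+$, so the first half of the theorem is simply a pointer to that proposition. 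Second --- and this is the one genuine inaccuracy in your plan --- you assume the radial barrier $\psi$ can always be taken to satisfy $\psi(r)\to\infty$ as $r\to R_*$; this holds for the $f-g$ case (it is exactly the content of Lemma~\ref{kwak}), but for the $f+g$ case the Cauchy solution of the radial ODE need \emph{not} blow up on its maximal interval, only its derivative does (this is Lemma~\ref{t1}), which is why the proof of Proposition~\ref{pro} replaces the exploding radial barrier by a conical test function $\varphi_m(x)=m(|x|-R)+C$ touched from above by the subsolution; ABP and Hopf's lemma are not actually used.
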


\begin{teorema}\label{genteo2} The equation \re{geneq1} has a positive solution provided either
$$
H(u,p)\le f(u) + g(p)
$$
and $f,g$ satisfy the hypotheses of Theorem~\ref{teo32} (ii), or
$$
H(u,p)\le f(u) - g(p)
$$
and $f,g$ satisfy the hypotheses of Theorem~\ref{teo22} (ii).

\end{teorema}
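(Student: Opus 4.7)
The plan is to deduce Theorem~\ref{genteo2} from Theorems~\ref{teo32} and~\ref{teo22} by producing a radial classical supersolution of the Pucci equation and then running a Perron-type argument. The key geometric observation is that for a radial, increasing, convex function $V(x)=U(|x|)$ the Hessian $D^2 V$ has eigenvalues $U''(r)\ge 0$ (simple) and $U'(r)/r\ge 0$ (multiplicity $N-1$), all nonnegative, so $\mathcal{M}^+(D^2 V)=\Lambda(U''(r)+(N-1)U'(r)/r)=\Lambda\,\Delta V$. Consequently, a radial, increasing, convex classical solution $V$ of $\Delta V=(f(V)\pm g(|\nabla V|))/\Lambda$ with $V(x)\to\infty$ automatically satisfies $\mathcal{M}^+(D^2 V)=f(V)\pm g(|\nabla V|)\ge H(V,|\nabla V|)$ in $\R^N$, so it is a classical supersolution of \re{geneq1} which explodes at infinity.

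For the $(P_+)$ case I would apply Theorem~\ref{teo32}(ii) to the rescaled pair $\tilde f=f/\Lambda$, $\tilde g=g/\Lambda$ to produce such a $V$; the $(P_-)$ case is handled analogously using Theorem~\ref{teo22}(ii). Once $V$ is in hand, observe that the assumption $H(u,p)\le f(u)\pm g(p)$ combined with $H(u,0)>0$ for $u>0$ and continuity of $H$ forces $H(0,0)=0$, so $u\equiv 0$ is a viscosity subsolution of \re{geneq1}. The standard viscosity Perron method, together with solving Dirichlet problems in balls $B_R$ with boundary data $V|_{\partial B_R}$ and passing to the limit $R\to\infty$ via viscosity stability and interior gradient/H\"older estimates for Pucci equations, then yields a viscosity solution $u$ of \re{geneq1} with $0\le u\le V$ on $\R^N$. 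Strong maximum principle arguments, combined with comparison on large annuli against radial lower barriers constructed by the same ODE technique, give positivity of $u$ and the required blow-up $u(x)\to\infty$.

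The main obstacle is verifying that the hypotheses of Theorem~\ref{teo32}(ii) (resp.\ Theorem~\ref{teo22}(ii)) transfer from $f,g$ to the rescaled pair $\tilde f=f/\Lambda,\tilde g=g/\Lambda$. Conditions \re{KO} and \re{gKO} are manifestly invariant under positive multiplicative rescaling of $f$ and $g$. The interaction condition \re{maincond} is more delicate: the change of variable $B=A/\sqrt{\Lambda}$ shows that the ratio $\tilde g(A\sqrt{\tilde F(s)})/(A^2\tilde f(s))$ for $\tilde f,\tilde g$ equals $(1/\Lambda)$ times the analogous ratio for $f,g$, so \re{maincond} carries over after relabeling $A_0$ and adjusting $\epsilon_0$ -- routinely when $\Lambda\ge 1$, but requiring a slightly larger margin $\epsilon_0$ when $\Lambda<1$. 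A separate, more conceptual difficulty is guaranteeing that the Perron limit does not collapse to the trivial subsolution and that $u$ actually tends to infinity at infinity; this is addressed through the annular radial barrier comparison mentioned above.
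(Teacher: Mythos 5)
Your proposal takes a genuinely different route from the paper, but it has two substantive gaps. The paper constructs the entire positive solution of the Pucci problem \re{geneq1} directly, as a radial solution of the ODE \re{geninit} tailored to $\mathcal{M}^+$ and to the given $H$: Peano's theorem provides a local radial solution, and a comparison argument \`a la Proposition~\ref{pro} against the entire radial solution of the Laplacian problem from Theorem~\ref{teo32}(ii) (resp.\ \ref{teo22}(ii)) rules out finite-time blow-up. Here the comparison is carried out for the proper operator $\mathcal{M}^+(D^2\cdot)-f(\cdot)\mp g(|\nabla\cdot|)$ (proper because $f$ is increasing), so no monotonicity of $H$ in $u$ is ever needed. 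Your route instead produces only a radial convex supersolution $V$ and then invokes Perron/Dirichlet-in-balls. That scheme needs a comparison principle for the operator $\mathcal{M}^+(D^2u)-H(u,|\nabla u|)$, which in turn requires $H$ to be nondecreasing in $u$; the theorem assumes only the upper bound $H(u,p)\le f(u)\pm g(p)$ and $H(u,0)>0$, and gives no monotonicity of $H$ in $u$. So the Perron step, and even the solvability of the auxiliary Dirichlet problems, is not justified under the stated hypotheses. This is a genuine gap, not a technicality: the paper's proof is specifically designed to avoid comparison with the ill-behaved $H$, by comparing against the canonical $f\pm g$.

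The second gap you flag yourself: the Perron limit can collapse to the trivial subsolution $u\equiv0$, and your remedy -- ``radial lower barriers constructed by the same ODE technique'' -- is not carried out. Observe that constructing a positive radial lower barrier for $\mathcal{M}^+(D^2u)=H(u,|\nabla u|)$ is precisely the paper's ODE argument; once you carry it out you no longer need Perron at all, and you have essentially reinvented the paper's proof. Two minor points: the rescaling to $\tilde f=f/\Lambda,\ \tilde g=g/\Lambda$ is vacuous here because the paper normalizes $\Lambda=1$ in advance (``which amounts to replacing $H$ by $H/\Lambda$''); and your assertion that condition \re{maincond} transfers ``routinely when $\Lambda\ge1$'' is reversed for the $\liminf$ alternative, where a large $\Lambda$ shrinks the ratio $\frac{1}{\Lambda}\,g(B\sqrt{F(s)})/(B^2f(s))$ and may take it below $\frac12+\epsilon_0$. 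Your observation that $\mathcal{M}^+(D^2V)=\Delta V$ for a radial convex $V$ is correct and is also used in the paper (via \re{proppucci}); the divergence begins immediately afterwards.
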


We recall that the Pucci's extremal operator $\mathcal{M}^+$ has  the property that
$\mathcal{M}^+(M)=\mathop{\sup}_{A\in \mathcal{S}}\mbox{tr}(AM), $ where $
\mathcal{S}$ denotes the set of symmetric matrices whose
eigenvalues lie in the interval $[ \lambda, \Lambda ]$, for some positive constants $0<\lambda\le \Lambda$ (we can assume that $\Lambda =1$, which amounts to replacing $H$ by $H/\Lambda$). Observe that $\mathcal{M}^+(D^2 u)=\Delta u$ if $\lambda=1$.
Hence we can infer from Theorem \ref{genteo1}  that \re{intro-cond2} is sufficient for the non-existence of positive entire solutions of {\it any} semi-linear inequality
$$
\sum_{i,j=1}^N a_{ij}(x)\partial_{ij}u \ge f(u) - g(|\nabla u|).
$$
where $(a_{ij}(x))$ is a matrix with eigenvalues in $[\lambda,1]$, and $f,g$ satisfy \re{condfg}.

\section{Preliminaries}
\setcounter{section}{2}
\setcounter{equation}{0}

In the proof of our main results we will use the following comparison principle.

\begin{propo}\label{Lemmacomparison}
Assume that $f $ and $g$ verify the condition \re{condfg} and $\Omega$ is a bounded domain.
Let $u,v$ be solutions of the inequalities
\begin{equation*}
\mathcal{F}(D^2 u) - f({u})\pm g(|\nabla u|)\geq 0\quad \mbox{in } \Omega,
\end{equation*}
\begin{equation*}
 \mathcal{F}(D^2 v) - f({v})\pm g(|\nabla v|)\leq 0\quad \mbox{in } \Omega,
\end{equation*}
where $\mathcal{F}$ is an uniformly elliptic second-order operator.
If
\begin{equation}\label{limsup}
\displaystyle\limsup_{\delta(x)\rightarrow 0}\frac{u(x)}{v(x)} < 1,
\end{equation}
where $\delta(x)$ is the distance to the boundary of $\Omega$, then
\begin{equation}\label{desuv}
u\leq v\quad \mbox{in }\Omega. \end{equation}
\end{propo}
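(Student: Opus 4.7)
I would argue by contradiction using the doubling of variables technique, exploiting the fact that \emph{the same} function $g$ of the gradient appears with the same sign on both sides, so the gradient contributions cancel identically once the penalisation forces the sub- and super-solution test gradients to coincide. Assume $M := \sup_\Omega(u-v) > 0$. Under the standard viscosity conventions, $u$ is upper semicontinuous, $v$ is lower semicontinuous, and $u-v$ attains its supremum on $\bar\Omega$. The first step is to rule out that this supremum is realised at $\partial\Omega$: letting $L$ be the limsup in \re{limsup} and picking $\theta \in (L,1)$, hypothesis \re{limsup} yields a neighbourhood $U$ of $\partial\Omega$ in $\Omega$ on which $u \leq \theta v$; since the ratio tacitly requires $v>0$ on $U$, we obtain $u-v \leq (\theta-1)v \leq 0$ on $U$, regardless of whether $v$ stays bounded or blows up. Hence $M$ is attained at some interior point $x_0 \in \Omega$.

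Next set $\Phi_\epsilon(x,y) := u(x) - v(y) - |x-y|^2/(2\epsilon)$ and let $(x_\epsilon, y_\epsilon)$ be a maximiser over $\bar\Omega \times \bar\Omega$. Standard doubling estimates give $x_\epsilon, y_\epsilon \to x_0$, $|x_\epsilon - y_\epsilon|^2/\epsilon \to 0$, $u(x_\epsilon) \to u(x_0)$ and $v(y_\epsilon) \to v(x_0)$, so $(x_\epsilon, y_\epsilon)$ lies in the interior of $\Omega\times\Omega$ for small $\epsilon$. Ishii's Lemma then produces symmetric matrices $X_\epsilon \leq Y_\epsilon$ such that $(p_\epsilon, X_\epsilon) \in \bar J^{2,+}u(x_\epsilon)$ and $(p_\epsilon, Y_\epsilon) \in \bar J^{2,-}v(y_\epsilon)$, where $p_\epsilon := (x_\epsilon - y_\epsilon)/\epsilon$. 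Applying the viscosity definitions,
\beeqs
\mathcal{F}(X_\epsilon) \geq f(u(x_\epsilon)) \mp g(|p_\epsilon|), \qquad \mathcal{F}(Y_\epsilon) \leq f(v(y_\epsilon)) \mp g(|p_\epsilon|),
\eeqs
and uniform ellipticity yields $\mathcal{F}(X_\epsilon) \leq \mathcal{F}(Y_\epsilon)$. Subtracting, the $g$ terms cancel exactly and we are left with $f(u(x_\epsilon)) \leq f(v(y_\epsilon))$. Sending $\epsilon \to 0$ and using continuity of $f$ gives $f(u(x_0)) \leq f(v(x_0))$, which contradicts $u(x_0) > v(x_0)$ together with the strict monotonicity of $f$.

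The main obstacle is the initial boundary step: because \re{limsup} controls the \emph{ratio} $u/v$ rather than the difference $u-v$, one must rewrite it as $u-v \leq (\theta-1)v$ and then rely on $\theta<1$ and $v>0$ to get a non-positive bound holding uniformly on a full neighbourhood of $\partial\Omega$, which is precisely what makes the argument go through regardless of whether $v$ blows up. Apart from this, the mechanism is robust: the $\pm$ ambiguity in the statement does not affect the proof, since both the subsolution and supersolution inequalities carry the same $\mp g$ term, and these cancel perfectly in the doubling argument.
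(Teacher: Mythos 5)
Your argument is correct and, in the viscosity case, carries out the standard doubling-of-variables / Ishii's-lemma proof that the paper invokes only by citing Ishii--Lions; the cancellation of the $g$ terms (because the penalisation hands the same test gradient $p_\epsilon$ to both $u$ and $v$) is exactly what the paper abbreviates by saying the operator is ``proper.'' Your rewriting of the boundary ratio hypothesis \re{limsup} as $u-v\le(\theta-1)v\le 0$ near $\partial\Omega$ (using $\theta<1$ and $v>0$) is the right way to force the positive maximum of $u-v$ into the interior even when $v$ blows up at $\partial\Omega$, so the proposal matches the paper's intended approach.
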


\noindent{\bf Proof}. This is standard, since the operator in the left hand side of the inequalities satisfied by $u$ and $v$ is {\it proper}. If both $u$ and $v$ are classical solutions, just evaluate the equations at a positive maximum of $u-v$. If one of $u,v$ is classical and the other weak, use a standard test function argument.
If both $u$ and $v$ are weak solutions we can apply for instance the results from Ishii-Lions \cite{IL} (in the viscosity solutions case) or from Pucci-Serrin \cite{PS} (if weak-Sobolev solutions are considered).\hfill $\Box$
\medskip

Next we give some preliminary properties
of solutions to the Cauchy problem
\begin{equation}\label{cauchy}
\left\{
\begin{array}{l}
 u''+\ds  \frac{N-1}ru'=f(u)\pm g(u'),\\[0.5pc]
u(0)=u_0>0,\quad u'(0)=0,
\end{array}
\right.
\end{equation}
where  $f$ and $g$ are functions satisfying \re{condfg}. Note that if $u(r)$ is a solution to \re{cauchy} and $u^\prime\ge 0$, then $u(|x|)$ is a solution to $(P_\pm)$.
Recall that, thanks to the continuity of $f$ and $g$, Peano's theorem guarantees that \re{cauchy} has at least one solution defined in a right neighborhood of zero.

The following result is essentially known, but we give a full proof, for readers' convenience.

\begin{propo}\label{teo6}
Let $u$ be a  solution to \eqref{cauchy} defined on some interval $(0,R)$. Then
$u(r)>0$, $u'(r)>0$, $u''(r)\ge 0$ for $r\in(0,R)$.
In particular, $u$ and $u'$ are increasing functions and
\begin{equation}\label{cotaR}
u(r) \leq u_0+Ru'(r), \qquad \mbox{for each }\;r\in(0,R).
\end{equation}
\end{propo}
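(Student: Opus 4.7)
The plan is to prove the three sign assertions in sequence---first $u>0$ and $u'>0$ by direct ODE considerations, then $u''\ge 0$ via an integral comparison (with an approximation twist for the $(P_-)$ case), and finally derive the linear bound from $u''\ge 0$.

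I would start with the initial value of $u''$. Since $u'(0)=0$, L'H\^opital gives $\lim_{r\to 0^+}(N-1)u'(r)/r=(N-1)u''(0)$; substituting into \eqref{cauchy} at $r=0$ yields $Nu''(0)=f(u_0)\pm g(0)=f(u_0)>0$. By continuity of $u''$ (which is continuous since $u\in C^2$), $u''>0$ on some $[0,\delta)$, hence $u'>0$ and $u>u_0$ on $(0,\delta)$. To extend $u'>0$ to all of $(0,R)$, suppose for contradiction $r_0>0$ is a first zero of $u'$; then $u(r_0)>u_0>0$, so \eqref{cauchy} at $r_0$ gives $u''(r_0)=f(u(r_0))\pm g(0)=f(u(r_0))>0$, while $u'$ going from positive to zero at $r_0$ forces $u''(r_0)\le 0$. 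This contradiction yields $u'>0$, and hence $u>u_0>0$, on $(0,R)$.

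The central point is $u''\ge 0$. I would argue by contradiction: if $r_0\in(0,R)$ is the first zero of $u''$, then $u''>0$ on $(0,r_0)$, so $u$ and $u'$ are strictly increasing there. Integrating the ODE in the form $(r^{N-1}u')'=r^{N-1}(f(u)\pm g(u'))$ from $0$ to $r_0$ yields
\[
r_0^{N-1}u'(r_0)=\int_0^{r_0}s^{N-1}\bigl(f(u(s))\pm g(u'(s))\bigr)\,ds.
\]
For the $+$ case the integrand is non-decreasing in $s$ (both $f\circ u$ and $g\circ u'$ are), so it is bounded above by its value at $s=r_0$; this gives $u'(r_0)\le (r_0/N)[f(u(r_0))+g(u'(r_0))]$, which inserted into the ODE at $r_0$ produces $u''(r_0)\ge (1/N)[f(u(r_0))+g(u'(r_0))]>0$, contradicting $u''(r_0)=0$. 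For the $-$ case the integrand is not monotone, so I would approximate $f,g$ uniformly on compacta by $C^1$ increasing functions $f_n,g_n$ with $f_n(0)=g_n(0)=0$, use Picard--Lindel\"of to obtain unique $C^3$ solutions $u_n$ to the corresponding Cauchy problems, and differentiate the ODE: at any hypothetical first zero $r_0$ of $u_n''$, one finds $u_n'''(r_0)=f_n'(u_n(r_0))u_n'(r_0)+(N-1)u_n'(r_0)/r_0^{2}>0$, which is incompatible with $u_n''\ge 0$ on $(0,r_0)$ and $u_n''(r_0)=0$. Hence $u_n''>0$; passing to the limit via standard ODE compactness then yields $u''\ge 0$.

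Having $u''\ge 0$, $u'$ is non-decreasing, so $u(r)=u_0+\int_0^r u'(s)\,ds\le u_0+ru'(r)\le u_0+Ru'(r)$, which is the claimed bound. The main obstacle is the $(P_-)$ case of $u''\ge 0$: the monotonicity driving the $(P_+)$ argument is lost because $-g\circ u'$ is non-increasing once $u'$ is non-decreasing, so the simple integral comparison fails and a smoothing argument (together with care regarding the possible non-uniqueness of the Cauchy problem when $f,g$ are merely continuous) is required.
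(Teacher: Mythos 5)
Your treatment of $u>0$, $u'>0$, and the final bound $u(r)\le u_0+Ru'(r)$ agrees with the paper. For the $+$ case of $u''\ge 0$ your integral comparison (integrate $(r^{N-1}u')'=r^{N-1}(f(u)+g(u'))$ up to a hypothetical first zero $r_0$ of $u''$, bound the integrand by its value at $r_0$, obtain $u'(r_0)\le (r_0/N)[f(u(r_0))+g(u'(r_0))]$, and feed this back into the ODE to get $u''(r_0)\ge\frac{1}{N}[f(u(r_0))+g(u'(r_0))]>0$) is different from, and cleaner than, the paper's argument (which estimates one-sided difference quotients of $u''$ at $r_0$). Both are correct.

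The $-$ case is where you have a genuine gap. You replace $f,g$ by $C^1$ approximations $f_n,g_n$, produce smooth solutions $u_n$ with $u_n''>0$ via a third-derivative computation at a hypothetical first zero, and then ``pass to the limit via standard ODE compactness.'' But the Cauchy problem \eqref{cauchy} with merely continuous, increasing $f,g$ need not have a unique solution, so Arzel\`a--Ascoli gives a subsequence of $u_n$ converging in $C^2_{\rm loc}$ to \emph{some} solution $\tilde u$ of \eqref{cauchy} satisfying $\tilde u''\ge 0$ --- not necessarily the arbitrary given solution $u$. Since Proposition~\ref{teo6} must hold for every solution, nothing in your limiting argument establishes the sign of $u''$ for the $u$ you started with, and this cannot be repaired without an extra idea. (The singular term $(N-1)u'/r$ also means Picard--Lindel\"of does not apply verbatim at $r=0$; one must work with $(r^{N-1}u')'=\dots$, but that is a minor technicality.) The paper avoids smoothing altogether with a short direct argument that you could reuse: if $u''<0$ on a maximal interval $(r_0,\bar r)$ with $u''(r_0)=0$, then on $[r_0,r_0+\epsilon]$ the function $u'$ is decreasing and nonnegative, so in $u''=f(u)-\frac{N-1}{r}u'-g(u')$ \emph{each} term is increasing in $r$ --- $f(u)$ because $u$ increases, $-\frac{N-1}{r}u'$ because $u'/r$ decreases, and crucially $-g(u')$ because $u'$ decreases --- so $u''$ is increasing from $u''(r_0)=0$, contradicting $u''<0$. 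The point you missed is that precisely where $u''<0$, the awkward term $-g(u')$ flips monotonicity in your favor, so the very obstruction you identified (loss of monotonicity of $f(u)-g(u')$) dissolves.
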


\begin{proof}
Letting $r\to0$ in the equation we obtain $u''(0)=\frac{f(u_0)}{ N}>0$,
so that $u''(r)>0$ if $r > 0$ is sufficiently small. This implies $u'(r)>0$
for $r>0$ close enough to zero. Assume there exists $r_1>0$ with $u'(r)>0$ if $r\in(0,r_1)$
and $u'(r_1)=0$. Then we would have $u''(r_1)\leq0$, while  the equation gives
$$
u''(r_1)={f\big(u(r_1)\big)}>0,
$$
a contradiction. Hence $u'(r)>0$ for all $r\in (0,R)$.

\medskip

Let us now deal with the sign of $u''$. We begin with \re{cauchy} with
minus sign in it:
$$
 u''+  \frac{N-1}{r}u'=f(u)-g(u').
$$
Assume $u''(\bar r)<0$ for some $\bar  r>0$. Let
$$
 r_0=\inf\{\tilde{r}:\ u''(r)<0 \hbox{ in } (\tilde{r},\bar r)\}.
$$
Since $u''(0) > 0$, we have $r_0>0$ and $u''( r_0)=0$. Moreover, $u'$ is decreasing in $r\in [r_0, r_0+\epsilon]$, for some $\epsilon>0$. Since we already know $u$ is increasing, we have by \re{condfg} that
$$
u''={f(u)}-\frac{N-1}{r}u'-{g(u')}
$$
is increasing. But then $u''( r_0)=0$ implies that $u''\ge0$ if $r$ is larger than and close to $r_0$, a contradiction. Thus $u''\geq0$ in $(0,R)$.

Next, we turn to \re{cauchy} with the plus sign:
$$
 u''+ \frac{N-1}{r}u'= f(u)+g(u'),
$$
in which case we can even show that $u''$ is strictly positive. First, if $N=1$ then $u''$ is obviously positive, since $f$, $g$, $u$ and $u'$ are. So we can assume $N>1$. Similarly to the above we assume for contradiction that there exist $\varepsilon, r_0>0$ such that $u''>0$ in $(r_0-\varepsilon, r_0)$, and $u''(r_0)=0$. Then the equation
$$
 u''+ (N-1){r}^{-1}u'= h(r)\quad \mbox{ in }\;[r_0-\varepsilon, r_0]
 $$
 for some increasing function $h$ clearly implies that
\begeqaet
 0&\ge& \limsup_{h\searrow0} \frac{u''(r_0-h) -  u''(r_0)}{-h}\\
 &\ge& \liminf_{h\searrow0} \frac{h(r_0-h) -  h(r_0)}{-h} -(N-1)\left( \frac{u'}{r}\right)^\prime_{r=r_0} \ge (N-1) \frac{u'(r_0)}{r_0^2} >0,
\eneqaet
 a contradiction.

Finally, since $u''\ge 0$, we have that $u'$ is nondecreasing. Thus, for $r\in(0,R)$
$$
u(r)=u_0+\int_0^ru'(s)\,ds\leq u_0+Ru'(r),
$$
which concludes the proof.
\end{proof}


\section{Proof of the main theorems}
\setcounter{section}{3}
\setcounter{equation}{0}

We begin with the existence statement in Theorem \ref{teo32}.

\noindent {\bf Proof of Theorem \ref{teo32}, Part (ii).}
Let $u$ be a solution to
\begin{equation}\label{cauchy3}
\left\{
\begin{array}{l}
 u''+\ds  \frac{N-1}ru'=f(u)+g(u'),\\[0.5pc]
u(0)=u_0>0,\ u'(0)=0,
\end{array}
\right.
\end{equation}
defined on some maximal interval $(0,R)$.
We claim that $R=\infty$, which implies that $u(|x|)$ is an entire solution of $(P_+)$. Assume for contradiction that $R$ is finite. Then Proposition \ref{teo6} implies that $u'(r)\to \infty$ as $r\to R$.

Suppose first  that the increasing function $u(r)$ is bounded by some constant $C_1$ as $r\to R$.
 Then  by Proposition \ref{teo6} we get $ u''\leq g(u')+f(C_1)$. If  we divide this inequality by $g(u')$  then integrate between $r_0$ and $r$ for some
arbitrary $0<r_0<r<R$, we obtain after the change of variables $s=u^\prime(r)$
\begin{equation}\label{contra1}
 \int_{u'(r_0)}^{u'(r)} \frac{ds}{g(s)}  \leq C_0 (r-r_0)\qquad (\mbox{with }C_0:=1+f(C_1)/g(u^\prime(r_0)).
\end{equation}
Then letting $r\to R$ we obtain a contradiction with \re{gKO}.

Therefore we can assume  that $u'(r)\to \infty$ and  $u(r)\to \infty$ as $r\to R$.
Now we  define
$$A(r)=\frac{r^{N-1}u'}{\sqrt{F(u)}},$$
and we split the proof in three different cases, according to the asymptotic behavior of $A(r)$ as $r\to R$.
\medskip

\noindent {\it Case 1}. Assume $A(r)$ is bounded as $r\to R$. Then by integrating between $R/2$ and any $r\in (R/2, R)$ we get
$$\left(\frac{R}{2}\right)^{N-1}\int_{u(R/2)}^{u(r)}\frac{ds}{\sqrt {F(s)}}= \left(\frac{R}{2}\right)^{N-1}\int_{R/2}^{r}\frac{u'(s)ds}{\sqrt {F(u(s))}} \leq \int_{R/2}^{r} A(s)\,ds\leq CR,$$
for some positive constant $C$. Letting $r\to R$ leads us to a contradiction with \re{KO}.
\medskip

\noindent {\it Case 2}. Assume  $A(r) \to \infty$ as $r\to R$.
Define $v(r)=u'(r)$ and $H(r)=\sqrt{F(u(r))}$ and notice that \eqref{cauchy3} can be recast as
\begin{equation}\label{cauchy4}
\left\{
\begin{array}{l}
\ds v'-g(v)+\frac{N-1}{r} v = 2 \frac{H}{v}H',\\[0.5pc]
v(0)=0.
\end{array}
\right.
\end{equation}
Note that the assumption of Case 2 is equivalent to $\ds\frac{H}{v}\to0 $ as $r\to R$.

Let $r_0>0$ be such that $v\ge 1$ in $(r_0,R)$.
We now define $$w=S^{-1}(v),$$
where $S=S(t)$ is the solution of
\begin{equation}\label{cauchy44}
\left\{
\begin{array}{l}
S'(t)=g(S(t)),\\[0.5pc]
S(1)=1.
\end{array}
\right.
\end{equation}
 Observe that $S$ is bijective from $[1,\infty)$ to  $[1,\infty)$, thanks to the assumption \eqref{gKO}. From \eqref{cauchy4} we get
$$w'\leq 1  +2 \frac{H H'}{vS'(w)},$$
that is,
\begin{equation}\label{evS}
[S^{-1}(v)]'\leq 1+2 \frac{H}{v}\frac{g(H)}{g(v)}[S^{-1}(H)]'
\end{equation}

Since  $\frac{H}{v} \to 0$ as $r\to R$ and $g$ is increasing we find that there exists $r_0<R_0<R$ such that
$$2\frac{H g(H)}{v g(v)}\leq \frac {1}{2}\quad \mbox{in}\quad [R_0,R).$$

Now we integrate \eqref{evS} between $r_0$ and any $r\in (R_0,R)$, to get
\begeqa
S^{-1}(v)&\leq& 1+{R}+\int_0^{R_0} 2\frac{H g(H)}{v g(v)}[S^{-1}(H)]'ds+\frac{1}{2}S^{-1}(H(r))-\frac{1}{2}S^{-1}(H(R_0))\nonumber\\
&=&C(R_0,R)+\frac{1}{2}S^{-1}(H(r))\label{fret},
\eneqa
where the quantity $C(R_0,R)$ is independent of $r\in (R_0,R)$.

Since $F$ is the primitive of the increasing function $f$ we obviously have
$H(r)\to \infty $ as $r\to R$. Hence by the definition of $S$ and \re{gKO}
we have $$S^{-1}(H)\to \infty\quad\mbox{ as }\; r\to R.$$ It then follows from \re{fret} that we can find $R_1\in(R_0, R)$ such that
$$S^{-1}(v)<S^{-1}(H)\quad\mbox{ in }\;(R_1,R).$$ But $S^{-1}$ is increasing, so
 $v<H$ in $(R_1,R)$,  a contradiction with $\ds\frac{H}{v} \to 0$ as $r\to R$.
\medskip

\noindent {\it Case 3}. Assume we are in neither of the previous two cases. This means $A(r)$ oscillates in the sense that
\begin{equation}\label{limsup2}
 \limsup_{r\to R} A(r)=+\infty \quad\mbox{ and }\quad \liminf_{r\to R} A(r)<+\infty.
\end{equation}
Thus there exists $A_0$ such that for each $\hat A\geq A_0$ we can find two sequences
$s_n, r_n\to R$ such that
\begin{equation}\label{ll}
A(s_n)=A(t_n)=\hat A,\qquad A'(s_n)\geq0,\qquad\mbox{and}\qquad A'(t_n)\leq 0.
\end{equation}

After a computation we find
\begin{equation*}
A'(r)=\frac{r^{N-1}f(u(r))}{\sqrt{F(u(r))}}W(r)
 \end{equation*}
 where we have used ${(r^{N-1}u')}^\prime = r^{N-1}(f(u) + g(u'))$ and we have set
 \begin{equation*}
W(r) := 1+ \frac{g(u')}{ f(u)}-\frac{(u')^2}{ 2 F(u)} = 1+ (\bar A(r))^2\left(\frac{g\left(\bar A(r)\sqrt{F(u(r))}\right)}{(\bar A(r))^2 f(u(r))}-\frac{1}{ 2}\right),
\end{equation*}
and
$$
\bar A(r)=\frac{A(r)}{r^{N-1}}.
$$
Note that $\bar A(s_n),\bar A(t_n) \to \tilde A:=\hat A R^{1-N}$. Therefore for each $\varepsilon>0$ there exists $n_0\in \mathbb{N}$ such that for all $n\ge n_0$ we have
$$
 W(t_n)\ge 1 + \bar A(t_n)\left(\delta^2\frac{g\left((\tilde A-\varepsilon) \sqrt{F(u)}\right)}{(\tilde A-\varepsilon)^2 f(u)}-\frac{1}{ 2}\right),
 $$
 $$
 W(s_n)\le 1 + \bar A(s_n)\left(\delta^{-2}\frac{g\left( (\tilde A+\varepsilon) \sqrt{F(u)}\right)}{(\tilde A+\varepsilon)^2 f(u)}-\frac{1}{ 2}\right),
$$
where $\delta : = (\tilde A-\varepsilon)/(\tilde A+\varepsilon)\to 1$ as $\epsilon\to0$. It is now obvious that we can fix $\varepsilon>0$ small enough and, if necessary, $\hat A$ so large that the condition \re{maincond} implies that either $W(s_n)<0$ or $W(t_n)>0$ for  large $n$, which is a contradiction with \re{ll}.

We have reached a contradiction in all three cases,
therefore  $u$ is defined for all $r>0$. Part (ii) of Theorem \ref{teo32} is proved.\hfill $\Box$
\medskip

{\noindent {\bf Remark 2}.} Let us now show that any of the two conditions given in Remark 1 can replace \re{maincond}.

First, it is clear that the same argument which lead us to the contradiction \re{contra1} can be used in case
$$
\limsup_{s\to\infty} \frac{f(as)}{g(s)}<\infty\quad \forall\:a>0.
$$
 Indeed, then by Proposition \ref{teo6} and $u^\prime(r)\to\infty$ as $r\to R$,
$$
f(u(r))\le f(u_0 + R u^\prime(r))\le f(2Ru^\prime(r)) \le C g(u^\prime(r))
$$
for all $r$ in some left neighborhood of $R$.

Second, if $g$ has at most linear growth at infinity, we observe that \re{cauchy4} implies
$$
\tilde v^\prime -C\tilde v\le \tilde v^\prime -g(\sqrt{\tilde v})\sqrt{\tilde v}\le \tilde H^\prime (r),
$$
where $\tilde v = v^2$ and $\tilde H = H^2$, and so
$$
\tilde v(r) \le e^{Cr}\int_{r_0}^{r} e^{-Cs}\tilde H^\prime (s)\,ds\le R e^{CR} \tilde H(r),
$$
which means we are in Case 1 above.
\medskip

\noindent {\bf Proof of Theorem \ref{teo32}, Part (i)}. We will use the following lemma. For the reader's convenience we recall that if $u(r)$ is a solution of \re{cauchy} then $u(|x|)$ is a solution of $(P_+)$.

\begin{lema}\label{t1} Under the assumption of Theorem \ref{teo32}, Part (i), any solution of \re{cauchy} exists on a maximal interval $(0,R)$ where $R=R(u_0)<\infty$ is such that $u^\prime(r)\to \infty$ as $r\to R$. In addition
\beeq\label{condexp}
u(r)\to \infty\quad \Longleftrightarrow \quad \int^\infty \frac{s}{g(s)}ds=\infty.
\eeq
\end{lema}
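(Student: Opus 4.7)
The plan is to prove the three assertions in sequence: $R<\infty$, $u'(r)\to\infty$ as $r\to R$, and $u(r)\to\infty \iff \int^\infty s/g(s)\,ds=\infty$. I split the proof of finiteness of $R$ according to which of \re{KO}, \re{gKO} is violated. If \re{KO} fails, I introduce the Keller--Osserman reference solution $\bar u$ of $\bar u''+(N-1)\bar u'/r=f(\bar u)$ with initial data $\bar u(0)=u_0,\bar u'(0)=0$, which by classical theory blows up at some finite $\bar R$. Since $g,u'\ge 0$, $u$ is a super-solution of $\bar u$'s equation with the same initial data, and a standard bootstrapped radial ODE comparison (integrating $(r^{N-1}(u-\bar u)')'\ge 0$, which holds wherever $u\ge\bar u$) yields $u\ge \bar u$ on their common domain, so $R\le\bar R<\infty$. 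If instead \re{gKO} fails, monotonicity of $g$ combined with $1/g$ integrable at infinity forces $s/g(s)\to 0$, so $g$ is genuinely superlinear. Assuming $R=\infty$, integration of $(r^{N-1}u')'\ge r^{N-1}f(u_0)$ gives $u'(r)\ge rf(u_0)/N\to\infty$, and for $r\ge 1$ with $u'$ large enough, superlinearity yields $g(u')\ge 2(N-1)u'\ge 2(N-1)u'/r$, so $u''\ge g(u')/2$. Dividing by $g(u')$ and integrating,
\[
\int_{u'(r_0)}^{u'(r)}\frac{dv}{g(v)}\ \ge\ \frac{r-r_0}{2},
\]
contradicts boundedness of the left side as $r\to\infty$.

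For the blow-up $u'(r)\to\infty$, I appeal to ODE continuation: the vector field is locally Lipschitz in $(u,u')$ for $r$ bounded away from $0$, so if $u'$ stayed bounded up to $R<\infty$ then by \re{cotaR} $u$ would also stay bounded, and the solution would extend past $R$, contradicting maximality. Hence $u'$ is unbounded, and by monotonicity $u'(r)\to\infty$.

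For the dichotomy, I exploit that $u''>0$ on $(0,R)$ (the strict-positivity claim of Proposition \ref{teo6} for the plus sign), so $s\mapsto u'(s)$ is strictly increasing and invertible. With $v=u'(s)$, $dv=u''\,ds$,
\[
u(R^-)-u(r_0)\ =\ \int_{r_0}^{R}u'(s)\,ds\ =\ \int_{v(r_0)}^{\infty}\frac{v}{u''(s(v))}\,dv,
\]
and the task reduces to sandwiching $u''$ between constant multiples of $g(v)$. If $u$ stays bounded by $M$ up to $R$, then $g$ must be unbounded (else $u''\le f(M)+\sup g$ would be bounded, contradicting $u'(r)\to\infty$), so $u''\le f(M)+g(v)\le 2g(v)$ for large $v$, giving $u(R^-)-u(r_0)\ge\tfrac12\int^\infty v/g(v)\,dv$; finiteness of the left side forces $\int^\infty s/g(s)\,ds<\infty$. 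Conversely, if $\int^\infty s/g(s)\,ds<\infty$, then a fortiori $\int^\infty ds/g(s)<\infty$, placing us in the \re{gKO}-failing regime, and the lower bound $u''\ge g(v)/2$ from the first step gives $u(R^-)-u(r_0)\le 2\int^\infty v/g(v)\,dv<\infty$, so $u$ is bounded. In the complementary case where \re{gKO} holds while \re{KO} fails, $\int^\infty v/g(v)\,dv\ge\int^\infty dv/g(v)=\infty$ automatically, so the right side of the equivalence is true; and $u\to\infty$ follows because the assumption $u\le M$ together with $u''\le f(M)+g(u')$ would, upon dividing by $g(u')$, lead to $\int^\infty dv/g(v)<\infty$, contradicting \re{gKO}. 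The main obstacle is controlling $u''$ from below by a multiple of $g(u')$ uniformly in $r$, independently of the size of $u$; this succeeds precisely because failure of \re{gKO} for an increasing $g$ forces superlinearity, so the linear-in-$u'$ correction $(N-1)u'/r$ and the nonnegative term $f(u)$ can both be absorbed into $g(u')/2$.
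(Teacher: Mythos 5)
Your proof is correct in substance, but it takes a genuinely different route from the paper's, which is worth comparing. The paper's argument hinges on a single clean observation: integrating $(r^{N-1}u')'=r^{N-1}(f(u)+g(u'))$ from $0$ to $r$ and exploiting the monotonicity of $f(u(\cdot))$ and $g(u'(\cdot))$ yields the pointwise inequality
$$u''\ \ge\ \tfrac{1}{N}\bigl(f(u)+g(u')\bigr),$$
which delivers both required estimates ($u''\ge N^{-1}f(u)$ for the Keller--Osserman integral, $u''\ge N^{-1}g(u')$ for the gradient integral) in one stroke, and then feeds directly into the proof of the dichotomy via the same substitution $v=u'$. Your proof avoids this inequality and instead splits: when \re{KO} fails you invoke the classical Keller--Osserman blow-up solution together with a radial comparison, and when \re{gKO} fails you combine the lower bound $u'\ge rf(u_0)/N$ with the superlinearity of $g$ (forced by Lemma \ref{analysis}) to manufacture the bound $u''\ge g(u')/2$. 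Both routes work. Yours needs more external input (the KO existence/blow-up theory, plus a radial ODE comparison lemma), whereas the paper's unified inequality makes the argument entirely self-contained; on the other hand your route makes the role of superlinearity more visible.

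Two small imprecisions to flag. First, in the continuation argument you appeal to the vector field being ``locally Lipschitz in $(u,u')$''; but $f$ and $g$ are only assumed continuous, so what is actually being used is Peano continuation: continuity plus boundedness of $(u,u')$ on $(0,R)$ already permits extension past $R$. Second, in the reverse direction of the dichotomy you cite ``the lower bound $u''\ge g(v)/2$ from the first step,'' but that bound was derived under the (contradiction) hypothesis $R=\infty$ and for $r\ge 1$. In the actual regime $R<\infty$ the same conclusion requires a fresh (if parallel) derivation: one fixes $r_0\in(0,R)$ and uses that $u'\to\infty$ as $r\to R$ together with superlinearity of $g$ to get $g(u')\ge 2(N-1)u'/r_0\ge 2(N-1)u'/r$ for $r$ near $R$, hence $u''\ge g(u')/2$ there. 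The underlying idea is sound; the cross-reference just obscures that the constants and domains are different.
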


\begin{proof} By integrating the equation ${(r^{N-1}u')}^\prime = r^{N-1}(f(u) + g(u'))$ we get
\begeqaet
u^\prime(r) &=& \frac{1}{r^{N-1}} \int_0^r s^{N-1} ( f(u(s)) + g(u^\prime(s)) ) ds\\
&\le & \frac{1}{r^{N-1}} ( f(u(r)) + g(u^\prime(r))\int_0^r s^{N-1} ds\\
&=& \frac{r}{N} ( f(u(r)) + g(u^\prime(r)),
\eneqaet
where we have also used \re{condfg} and Proposition \ref{teo6}. Inserting this inequality into \re{cauchy} we then obtain
\begin{equation}\label{cosa2}
u''\geq\frac{1}{ N}\big(f(u)+g(u')\big).
\end{equation}
Multiplying  $u''\geq N^{-1} f(u) $ by $u^\prime$ we get
$$
({u'}^2)^\prime \ge 2N^{-1}(F(u))^\prime$$ and we easily deduce
$$\int_{u_0}^{u(r)}(F(s)-F(u_0))^{-1/2}ds\geq \sqrt{2N^{-1}}r. $$
Similarly we can divide $u''\geq N^{-1}g(u')$
 by $g(u')$ and integrate between some $r_0>0$ (so that  $u'(r_0)>0$) and an arbitrary $r>r_0$, to get
\begin{equation}\label{ggg}
 N \int_{u'(r_0)}^{u'(r)} \frac{ds}{g(s)}  \geq r-r_0.
\end{equation}
Under the assumptions of part I of Theorem \ref{teo32} one of the integrals in the left-hand sides of the last two inequalities is bounded above, so $r$ is also bounded above, that is, $R=R(u_0)$ is finite.

Recall we proved in Proposition \ref{teo6} that $u, u^\prime$ are increasing and $u(r)\le C(1+u^\prime(r))$. Hence $u^\prime(r)\to \infty$ as $r\to R$.

Next, if the integral in the right-hand side of \re{condexp} is finite, the inequality
$$
\frac{u'' u'}{g(u')}\ge N^{-1} u'
$$
implies, after integration,
$$
u(r)-u(r_0)\le C\int_{u'(r_0)}^{u^\prime(r)} \frac{s}{g(s)} ds <\infty
$$
for any $0<r_0<r<R$.

On the other hand, if $u(r)\le C$ as $r\to R$  then
$$
u'' \le u'' + \frac{N-1}{r} u' \le g(u') + f(u) \le  g(u') + f(C)
$$
in $(0,R)$. We multiply this inequality by $u'$ and integrate, to obtain
$$
\int_{u'(r_0)}^{u^\prime(r)} \frac{s}{g(s)} ds \le u(r) - u(r_0),
$$
for any $0<r_0<r<R$.  Letting $r\to R$ in this inequality we see that the integral in the right-hand side of \re{condexp} is finite, since we already know that $u^\prime(r)\to \infty$ and $u(r)\le C$ as $r\to R$.
\end{proof}

We will also make use of the following simple analysis lemma.

\begin{lema}\label{analysis}
Assume $h:[1,\infty) \to (0,\infty)$ is a nonincreasing function such that
$$
\int_1^\infty h(s)ds <\infty.
$$
Then $th(t)\to 0$ as $t\to \infty$.
\end{lema}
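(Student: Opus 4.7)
The plan is to exploit the monotonicity of $h$ to sandwich $th(t)$ between two vanishing integral tails. Specifically, since $h$ is nonincreasing and nonnegative, for every $t\ge 2$ one has the trivial bound
$$
\int_{t/2}^{t} h(s)\,ds \;\ge\; \frac{t}{2}\, h(t),
$$
simply because $h(s)\ge h(t)$ for $s\in[t/2,t]$ and the interval has length $t/2$.

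The key observation is then that the integral on the left tends to $0$ as $t\to\infty$. This follows immediately from the Cauchy criterion applied to the convergent integral $\int_1^\infty h(s)\,ds$: the tails $\int_A^\infty h(s)\,ds$ tend to $0$ as $A\to\infty$, and $\int_{t/2}^t h(s)\,ds$ is dominated by the tail $\int_{t/2}^\infty h(s)\,ds$. Combining the two displayed facts yields $\tfrac{t}{2}h(t)\to 0$, hence $th(t)\to 0$, as required.

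There is essentially no obstacle; the statement is a one-line consequence of the monotonicity hypothesis together with the convergence of the integral. The only point worth being careful about is to use the interval $[t/2,t]$ (or any interval of length comparable to $t$ ending at $t$) rather than $[1,t]$, because the latter does not produce a vanishing bound. The choice $[t/2,t]$ works because its length grows linearly in $t$ while its total $h$-mass is controlled by a tail of a convergent integral.
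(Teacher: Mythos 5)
Your proof is correct. The underlying idea is the same as the paper's — controlling $t\,h(t)$ by the integral of $h$ over the half-interval $[t/2,t]$, which is possible because $h$ is nonincreasing — but your presentation is more direct. The paper argues by contradiction: it assumes a sequence $t_n\to\infty$ with $t_n h(t_n)\ge\epsilon_0$, thins it so that $t_{n+1}\ge 2t_n$, and shows that the resulting telescoping sum $\sum_n\int_{t_n}^{t_{n+1}}h$ diverges, contradicting integrability. You instead go straight to the bound $\tfrac{t}{2}h(t)\le\int_{t/2}^{t}h(s)\,ds\le\int_{t/2}^{\infty}h(s)\,ds\to 0$, invoking the Cauchy criterion for the convergent improper integral. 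This avoids any subsequence extraction and is shorter; what it costs is nothing, since the Cauchy-tail fact is exactly as elementary as the divergent-series comparison the paper uses.
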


\begin{proof}
It is clear that $h(t)\searrow 0$ as $t\to \infty$. Assume for contradiction that there exists a sequence $t_n\to \infty$ such that
$$
t_nh(t_n)\ge \epsilon_0>0
$$
as $n\to\infty$. Without restricting the generality we can assume that $t_{n+1}\ge 2t_n$ for each $n\ge 1$. Then we have
\begeqaet
\int_1^\infty h(s)ds &\ge& \sum_{n=1}^\infty \int_{t_n}^{t_{n+1}} h(s)ds  \ge \sum_{n=1}^\infty h(t_{n+1})(t_{n+1}-t_n)\\
&\ge &  \sum_{n=1}^\infty \epsilon_0 \left( 1 - \frac{t_n}{t_{n+1}}\right) \ge \sum_{n=1}^\infty \epsilon_0/2 =\infty,
\eneqaet
a contradiction.
\end{proof}

\bigskip

We will end the proof with the help of the following proposition, which contains a stronger statement than Part (i) of Theorem \ref{teo32}.

\begin{propo}\label{pro}
Under the assumption of Theorem \ref{teo32}, Part (i), the inequality
 \begin{equation}\label{EQ}
{\mathcal M}^+(D^2 w)\ge g(|\nabla w|)+f(w)\qquad \mbox{in}\quad \R^N
\end{equation}
does not have a non-negative, non-trivial viscosity subsolution $w$.
\end{propo}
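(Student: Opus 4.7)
The plan is to argue by contradiction. Let $w\ge 0$, $w\not\equiv 0$, be a viscosity subsolution of \re{EQ} on $\R^N$. Since viscosity subsolutions are upper semi-continuous, $w$ is locally bounded, and nontriviality yields some $x_0\in\R^N$ with $w(x_0)>0$. The strategy is to use the radial blow-up profiles from Lemma \ref{t1} as barriers centered at $x_0$ and conclude via the comparison principle in Proposition \ref{Lemmacomparison}.

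For each $u_0>0$, Lemma \ref{t1} gives a maximal solution $u=u_{u_0}$ of \re{cauchy} on $[0,R(u_0))$, with $R(u_0)<\infty$ and $u'(r)\to\infty$ as $r\to R(u_0)$. Define $U(x):=u(|x-x_0|)$ on $B:=B_{R(u_0)}(x_0)$. By Proposition \ref{teo6}, $u$ is convex and nondecreasing, so $D^2 U$ has only nonnegative eigenvalues, and (after normalizing $\Lambda=1$) one obtains $\mathcal{M}^+(D^2 U)=\Delta U= f(U)+g(|\nabla U|)$ classically in $B$, so that $U$ is a viscosity supersolution of \re{EQ} on $B$.

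The argument now splits according to \re{condexp}. In the case $\int_1^\infty s/g(s)\,ds=\infty$, one has $u(r)\to\infty$ as $r\to R(u_0)$, so $U(x)\to\infty$ as $x\to\p B$ while $w$ remains locally bounded. Hence $\limsup_{\delta(x)\to 0}w(x)/U(x)=0<1$, and Proposition \ref{Lemmacomparison} gives $w\le U$ in $B$; in particular $w(x_0)\le U(x_0)=u_0$. Letting $u_0\searrow 0$ contradicts $w(x_0)>0$.

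The main obstacle is the complementary case $\int_1^\infty s/g(s)\,ds<\infty$, in which $u_{u_0}$ stays bounded on $[0,R(u_0))$ while only $u'_{u_0}\to\infty$ at the endpoint. Now $U$ is a bounded barrier with unbounded gradient on $\p B$, and the boundary ratio condition of Proposition \ref{Lemmacomparison} cannot be verified directly. Here I would apply Lemma \ref{analysis} with $h(s)=1/g(s)$, which forces $s/g(s)\to 0$, and combine it with the first-integral identity $u(r)-u(r_0)\le C\int_{u'(r_0)}^{u'(r)} s/g(s)\,ds$ from the proof of Lemma \ref{t1} to quantify how fast $u'_{u_0}$ blows up relative to $u_{u_0}$ near $R(u_0)$. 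The plan is then to compare $w$ with $U$ on the slightly shrunken ball $B_{R(u_0)-\eta}(x_0)$, on whose boundary $u'$ is finite but arbitrarily large, control the boundary values of $w$ by a Bernstein-type local gradient estimate for viscosity subsolutions (available because $g$ grows superlinearly, indeed superquadratically in this regime), and finally pass to the limits $\eta\to 0$ and $u_0\to 0$ to reach a contradiction as in the first case. Making the gradient estimate rigorous for merely upper semi-continuous subsolutions, and extracting the right smallness of the limit value $L_{u_0}=\lim_{r\to R(u_0)}u_{u_0}(r)$ from Lemma \ref{analysis}, is the delicate part of the argument.
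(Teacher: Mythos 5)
Your reduction to the radial Cauchy problem and the split according to whether $u\to\infty$ or $u$ stays bounded as $r\to R$ is exactly the paper's structure, and your treatment of the first case (where $u(r)\to\infty$, so $U$ blows up on $\partial B$ and Proposition \ref{Lemmacomparison} applies directly) is correct; in fact you may fix $u_0=w(x_0)/2$ once and for all rather than sending $u_0\searrow 0$.

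The genuine gap is the second case, where $u$ remains bounded and only $u'(r)\to\infty$ as $r\to R$. You correctly identify this as "the delicate part," but the strategy you sketch — a Bernstein-type interior gradient bound for viscosity subsolutions, followed by limits $\eta\to 0$, $u_0\to 0$ — is both unproved and, as stated, not obviously sufficient: a bound on $|\nabla w|$ near $\partial B_{R-\eta}(x_0)$ does not by itself furnish the bound on $w$ needed to verify the ratio condition \re{limsup} for Proposition \ref{Lemmacomparison}, since $U$ is now bounded on $\partial B$. Also note that the regime $\int^\infty 1/g(s)\,ds<\infty$ (which is what actually governs this case) only yields $g(s)/s\to\infty$ via Lemma \ref{analysis} applied to the nonincreasing function $1/g$; the superquadratic growth you invoke does not follow from Lemma \ref{analysis} because $s/g(s)$ need not be monotone. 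The paper takes a completely different route here: first it shifts $v=u(|\cdot|)$ upward by a constant $a$ so that $v_a=v+a$ dominates $w$ on $\partial B(0,R)$ and touches it at some $x_0\in\partial B(0,R)$ (this uses only comparison and the fact that $v$ is bounded). Then it extends the barrier \emph{outside} the ball by a radial cone $\varphi_m(x)=m(|x|-R)+v_a(x_0)$. A viscosity test-function argument with $\varphi_{2m}-k(|x|-R)^2$ at the touching point $x_0$ (valid precisely because $u'\to\infty$ at $R$, which makes the graph of $v_a$ steeper than any cone from inside) shows that $w$ must exceed $\varphi_m$ somewhere in an outer annulus, for every $m$. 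On the other hand, since $g(m)/m\to\infty$, the cone $\varphi_m$ is a strict supersolution in the annulus for $m$ large, and since one can also make $\varphi_m\ge w$ on the outer sphere, a sliding/touching argument with $\tilde\varphi_m=\varepsilon+\varphi_m$ produces a contradiction. This barrier-outside-the-ball construction is what your proposal is missing, and I would not expect the gradient-estimate approach to replace it without substantially more work.
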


\noindent
{\bf Proof.} Suppose the statement is false.
We may assume that $w(0)>0$. Let $u$ be a solution of \re{cauchy} with $u_0=w(0)/2$ (so that Proposition \ref{teo6} and Lemma \ref{t1} apply).
Set $v(x)=u(|x|)$ and observe that, by Proposition \ref{teo6}, $v$ is a convex function. Then, since the Pucci maximal operator has the property that \begin{equation}\label{proppucci}
{\mathcal M}^+(M) = \lambda \sum_{e_i<0} e_i + \sum_{e_i>0} e_i,
\end{equation}
where $e_i$ denote the eigenvalues of the symmetric matrix $M$, we have
\begin{equation}\label{EQq}
{\mathcal M}^+(D^2 v)= g(|\nabla v|)+f(v)\qquad \mbox{in}\quad B(0,R)
\end{equation}
(here $R$ is the number from Lemma \ref{t1}).

First we observe that $u$ is bounded as $r\to R$ -- indeed, if $u(r)\to \infty$ as $r\to R$ we apply the comparison principle in $B(0,R-\varepsilon)$, where  $\varepsilon>0$ is chosen sufficiently small so that $v>w$ on $\partial B(0,R-\varepsilon)$, and get a contradiction with $u_0=w(0)/2$.

By applying in the same way the comparison principle in $B(0,R)$ we see that there exists $\bar x\in \partial B(0,R)$ such that
$$
w(\bar x)>v(\bar x).
$$

Then, there is
$a>0$ such that the function $v_a=v+a$ satisfies
\begin{equation}
\label{b1}
w(x)\le v_a(x) \qquad \mbox{for all }\quad x\in \partial B(0,R),
\end{equation}
and there is $x_0\in\partial B(0,R) $ such that
\begin{equation}
\label{b}
w(x_0)=v_a(x_0).
\end{equation}
We observe that, since $f$ is increasing, the function $v_a$ is a supersolution of (\ref{EQ}), that is,
\begin{equation}\label{EQv}
{\mathcal M}^+(D^2v_a)\le g(|\nabla v_a|)+f(v_a)\qquad \mbox{in}\quad B(0,R).
\end{equation}
Then, we use (\ref{b1}) and the comparison principle, to obtain that
 \begin{equation}
\label{b3}
w(x)\le v_a(x) \qquad \mbox{for all }\quad x\in B(0,R).
\end{equation}

Next, for any  $m>0$, we consider the radially symmetric conical function
$$
\varphi_m(x)=m(|x|-R)+a+u(R)=m(|x|-R)+v_a(x_0).
$$
{\bf Claim:} For all $m>0$ and $r_0>R$
there exists $\bar x\in B(0,r_0)\setminus B(0,R)$ such that
$$
\varphi_m(\bar x)<w(\bar x).
$$
In order to prove the claim, we assume there is $r_0>R$ and $m>0$ such that
$$
w(x)\le \varphi_m(x), \qquad \mbox{for all }\quad x\in B(0,r_0)\setminus B(0,R).
$$
Then, for any $k>0$, the function
$$
\varphi(x)=\varphi_{2m}(x)-k(|x|-R)^2
$$
is a test function at $x=x_0$ for the inequality \re{EQ} satisfied by $w$.
Indeed, for $x$ such that $|x|\in (R, R+m/k)$ we clearly have
$$
 \varphi(x)\ge \varphi_m(x)\ge w(x),
 $$
 while \re{b3} and $u^\prime(r)\to \infty$ as $r\to R$ imply that
$$
 \varphi(x)\ge v_a(x)\ge w(x)
 $$
 for all $x$ such that $|x|\in (R-\sigma,R)$, for some $\sigma>0$. Moreover
 $$
\varphi(x_0)=\varphi_{2m}(x_0)-k(|x_0|-R)^2=a+u(R)=w(x_0).
$$
Thus, we may test the equation (\ref{EQ}) at $x_0$ with $\varphi$, so we have
$$
{\mathcal M}^+(D^2\varphi(x_0))\ge g(|\nabla \varphi(x_0)|)+f(\varphi(x_0)).
$$
However, for any large $k$ we have
$$
 {\mathcal M}^+(D^2\varphi(x_0))=
\lambda(-2k+2(N-1)\frac{m}{R})<0,
$$
 a contradiction which proves the claim.

Now we continue the proof of the lemma considering the function $\varphi_m$.
Given $m>0$ and $x$ such that $|x|\ge R$, we have that
$$
{\mathcal M}^+(D^2\varphi_m(x))=\Lambda (N-1)\frac{m}{|x|}\le \Lambda (N-1)\frac{m}{R}.
$$
On the other hand, we clearly have
$$
g(|\nabla\varphi_m(x)|)+f(\varphi_m(x))\ge g(m).
$$
Observe now that if \eqref{gKO} does not hold then by Lemma \ref{analysis}
\begin{equation}\label{G}
\lim_{m\to\infty}\frac{g(m)}{m}=\infty.
\end{equation}
 Thus, using (\ref{G}), for all $m$ large enough we have
\begin{equation}\label{contra}
{\mathcal M}^+(D^2\varphi_m (x))<g(|\nabla\varphi_m(x)|)+f(\varphi_m(x))\qquad\mbox{for all }\quad x\in B(0,r_0)\setminus B(0,R).
\end{equation}
We may in addition assume that $m$ and $r_0>R$ are fixed so that $\varphi_{m}\ge w$ on $\partial B(0,r_0)$. However, by what we proved we know that the graph of $w$ is above the graph of $\varphi_m$ somewhere in $B(0,r_0)\setminus B(0,R)$. Then for each small $\varepsilon>0$ the function
$$
\tilde \varphi_{m}(x)=\varepsilon+\varphi_{m}(x)
$$
satisfies
\begin{eqnarray}
w(x)&<& \tilde \varphi_{m}(x)\qquad\mbox{for all }\quad x\in \partial B(0,R) \label{pf1},\\
w(x)&<& \tilde \varphi_{m}(x)\qquad\mbox{for all }\quad x\in \partial B(0,r_0)\label{pf2}\mbox{ and}\\
w(\bar x)&>& \tilde \varphi_{m}(\bar x)\qquad\mbox{for some } \bar x \in B(0,r_0)\setminus B(0,R).\label{pf3}
\end{eqnarray}
If we start increasing $\varepsilon$, conditions (\ref{pf1}) and (\ref{pf2})
remain true, while for certain value $\varepsilon=\varepsilon_0$ the condition (\ref{pf3}) changes into
$$
w( x)\le \tilde \varphi_{m}( x)\qquad\mbox{for all } \bar x \in B(0,r_0)\setminus B(0,R),
$$
and there exists $\bar x \in B(0,r_0)\setminus \overline{B(0,R)}$ such that
$$
w(\bar x)= \tilde \varphi_{m}(\bar x).
$$
Thus, the function $\tilde \varphi_{m}$ is a test function at $\bar x$ for the equation satisfied by $w$, and we get a
contradiction with (\ref{contra}), thus completing the proof of the proposition.

Theorem \ref{teo32} is proved. \hfill $\Box$
\bigskip

\noindent {\bf Proof of Theorem \ref{teo22}, Part (i)}.  We will use the following lemma.

\begin{lema}\label{kwak}
There exists a strictly increasing  supersolution ${\bar u}$ to
\begin{equation}\label{cauchy2}
\left\{
\begin{array}{l}
 u''+\ds  \frac{N-1}ru'\le f(u)- g(u'),\\[0.5pc]
u(0)=\bar u_0,\ u'(0)=0,
\end{array}
\right.
\end{equation}
which ceases to exist at a finite $R$, with ${\bar u}(r)\to \infty$ as $r\to R$,  provided  $\bar u_0$ is taken large enough.
\end{lema}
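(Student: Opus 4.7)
The plan is to construct $\bar u$ on $[0,R)$ as the strictly increasing solution of the first-order autonomous ODE
\[
\bar u'(r) = \Gamma^{-1}\!\bigl(F(\bar u(r)) - F(\bar u_0)\bigr), \qquad \bar u(0) = \bar u_0,
\]
defined via separation of variables $r=\int_{\bar u_0}^{\bar u(r)} ds/\Gamma^{-1}(F(s)-F(\bar u_0))$. The integrand has only a mild square-root singularity at $s=\bar u_0$ (since $\Gamma(y)\sim 2Ny^2$ near $y=0$) and is integrable at infinity by \eqref{intro-cond2}, so $R<\infty$, $\bar u(r)\to\infty$ as $r\to R$, and asymptotic analysis gives $\bar u'(0)=0$.

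Along the trajectory, $\Gamma(\bar u')=F(\bar u)-F(\bar u_0)$. Combined with the monotonicity estimate $\int_{\bar u'}^{2\bar u'} g\,dt \ge \bar u' g(\bar u')$ and with $\Gamma'(s)=2g(2s)+4Ns\ge 4Ns$, this yields
\[
\bar u' g(\bar u')+2N(\bar u')^2 \le F(\bar u), \qquad \bar u''=\frac{f(\bar u)\bar u'}{\Gamma'(\bar u')}\le \frac{f(\bar u)}{4N};
\]
in particular $\bar u'\le\sqrt{F(\bar u)/(2N)}$ and $g(\bar u')\le F(\bar u)/\bar u'$. A crucial consequence of \eqref{intro-cond2} is super-linear growth of $f$: $\Gamma(s)\ge 2Ns^2$ and \eqref{intro-cond2} together force $\int ds/\sqrt{F(s)}<\infty$, so $F(s)/s^2\to\infty$, and the elementary bound $F(s)\le sf(s)$ then gives $f(s)/s\to\infty$. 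For any preassigned $C_0$, choosing $\bar u_0$ large makes $f(\bar u)^2\ge C_0 F(\bar u)$ hold uniformly on $[\bar u_0,\infty)$.

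I would then verify the supersolution inequality $\bar u''+(N-1)\bar u'/r+g(\bar u')\le f(\bar u)$ on $(0,R)$ by estimating each term: $\bar u''\le f(\bar u)/(4N)$ as above; $g(\bar u')\le F(\bar u)/\bar u'$ is bounded by $\varepsilon f(\bar u)$ using the lower bound $\bar u'\gtrsim F(\bar u)/f(\bar u)$, itself derived by reinserting $\Gamma(\bar u')\ge 2N(\bar u')^2$ together with $f^2\ge C_0 F$ into the defining identity; the middle term $(N-1)\bar u'/r$ is bounded near $r=0$ by the Taylor limit $\bar u'/r\to\bar u''(0)\le f(\bar u_0)/(4N)$ and, for $r$ bounded below, by $(N-1)\sqrt{F(\bar u)/(2N)}/r\le\varepsilon f(\bar u)$ via the same super-linearity. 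The main obstacle is making these three bounds close simultaneously at all scales of $r$; the form $\Gamma(s)=\int_0^{2s}g\,dt+2Ns^2$ is crafted precisely so that the $2Ns^2$ term handles $\bar u''$ and the gradient contribution while the $\int_0^{2s}g$ term handles $g(\bar u')$, but extracting uniform pointwise bounds requires careful use of both.
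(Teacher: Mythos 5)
Your proposal is in the spirit of the paper's construction but it has two genuine gaps, the first of which is fatal to the approach as written.

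\textbf{The solution of your separated ODE may not exist.} You define $\bar u$ by $\bar u'=\Gamma^{-1}\bigl(F(\bar u)-F(\bar u_0)\bigr)$, $\bar u(0)=\bar u_0$, claiming the integrand in $r=\int_{\bar u_0}^{\bar u}ds/\Gamma^{-1}(F(s)-F(\bar u_0))$ has ``only a mild square-root singularity'' because $\Gamma(y)\sim 2Ny^2$ near $0$. This is false in general: $\Gamma(y)=\int_0^{2y}g+2Ny^2$, and the term $\int_0^{2y}g$ need not be $o(y^2)$. If $g(t)\to0$ slowly (e.g.\ $g(t)=1/\log(1/t)$ near $0$), then $\Gamma(y)\sim 2y/\log(1/y)$ is nearly linear, so $\Gamma^{-1}(u)\sim\tfrac12 u\log(1/u)$, and $\int_0 du/\Gamma^{-1}(u)=\infty$. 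In that case $\int_{\bar u_0}ds/\Gamma^{-1}(F(s)-F(\bar u_0))$ diverges at the lower limit, the only solution of your ODE with $\bar u(0)=\bar u_0$, $\bar u'(0)=0$ is the constant, and your construction produces nothing. Such $g$ are perfectly compatible with \eqref{intro-cond2}. The paper avoids this by writing $\bar u(r)=\phi(R^2-r^2)$ with $\phi'=-\Gamma^{-1}(F(\phi))$ (no $-F(\bar u_0)$ subtracted), so that $\phi'$ stays bounded away from $0$ at $t=R^2$; the factor $-2r$ in $\bar u'(r)=-2r\,\phi'(R^2-r^2)$ then delivers $\bar u'(0)=0$ automatically without forcing $\Gamma^{-1}$ to be evaluated near $0$. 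That substitution also rewrites $(N-1)\bar u'/r$ as $-2(N-1)\phi'$, killing the $1/r$ issue at a stroke, whereas you must bound $\bar u'/r$ separately.

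\textbf{The lower bound on $\bar u'$ is derived in the wrong direction.} To control $g(\bar u')\le F(\bar u)/\bar u'$ you invoke a lower bound $\bar u'\gtrsim F(\bar u)/f(\bar u)$, saying it comes from ``$\Gamma(\bar u')\ge 2N(\bar u')^2$ together with $f^2\ge C_0 F$.'' But $\Gamma(\bar u')\ge 2N(\bar u')^2$ combined with $\Gamma(\bar u')=F(\bar u)-F(\bar u_0)$ gives $\bar u'\le\sqrt{(F(\bar u)-F(\bar u_0))/(2N)}$, which is an \emph{upper} bound on $\bar u'$; and $f^2\ge C_0F$ makes this upper bound even smaller. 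Nothing here gives $\bar u'$ from below, and indeed no pointwise lower bound of the desired form can hold near $r=0$ where $\bar u'(0)=0$ but $\bar u(0)=\bar u_0>0$. What does work, if your solution existed, is the cruder chain $g(\bar u')\le (F(\bar u)-F(\bar u_0))/\bar u'\le (\bar u-\bar u_0)f(\bar u)/\bar u'\le r\,f(\bar u)$ (using $\bar u''\ge0$) together with $\bar u'/r\le f(\bar u)/(4N)$ (integrating $\bar u''\le f(\bar u)/(4N)$ from $0$ to $r$), giving a total bound $(\tfrac14+r)f(\bar u)$ and reducing everything to making $R$ small. That shows your overall plan could be repaired \emph{modulo} the existence issue above, but the argument as written does not get there. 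Compare with the paper, where the analogue $\phi(t)\le\tfrac12|\phi'(t)|$ follows cleanly from Lemma \ref{analysis} applied to the integrand of \eqref{intro-cond2}, precisely because $\phi'$ is evaluated near $t=0$ (where it is large), never near $t=R^2$.
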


\begin{proof}
We will assume for the moment that $R\le 1/2$, and will search for a supersolution in
the form
\begin{equation*}
\bar{u}(r)=\phi(R^2-r^2),
\end{equation*}
where $\phi(t)\to\infty$ as $t\to 0$, $t>0$. It is not hard to show that the function $\bar{u}$ thus obtained will be a supersolution of \eqref{cauchy2}
if
\begin{equation}\label{fwer}
 4r^2\phi''-2N \phi' +g\big(2r|\phi'|\big)
\leq f\big(\phi \big),
\end{equation}
where $'$ stands for differentiation with respect to $t=R^2-r^2$.
Assume in addition that $\phi'<0$, $\phi''\ge0$. Then it suffices to have
\begin{equation}\label{supersolucion}
\phi''+2 N|\phi'|+g(|\phi'|)\leq f(\phi).
\end{equation}
Let now $\phi(t)$ be the function defined by the implicit relation
$$
\int_{\phi(t)}^\infty \frac{ds}{\Gamma^{-1}(F(s))} = t.
 $$
 Thanks to our hypotheses on $f$ and $g$ as well as \eqref{intro-cond2}, $\phi(t)$ is well defined and we have
  $\phi'<0$,  $\phi(t)\to\infty$ as $t\to 0$, and
\begin{equation}\label{ecuacion-phi}
\ds\Gamma(|\phi'|)=F(\phi).
\end{equation}
Notice that by Lemma \ref{analysis} the convergence of the integral in \eqref{intro-cond2} implies
\begin{equation*}
\frac{\Gamma^{-1}(F(s))}{s}\rightarrow\infty\qquad \mbox{as }\; s\to \infty,
\end{equation*}
and hence
\begin{equation*}
\frac{|\phi'(t)|}{\phi(t)}\rightarrow\infty\qquad \mbox{as }\;t\to 0.
\end{equation*}
In particular, there exists $\e >0$ such that $\phi(t) \leq \frac12|\phi'(t)|$ if $t\in (0,\e)$.
Restrict $R$ further to have $R^2 \leq \e$, so that $t=R^2-r^2\in (0,\e)$  for $r\in (0,R)$.

Let us check
that $\phi$ verifies \re{supersolucion}. By differentiating with respect to $t$ in \eqref{ecuacion-phi}
we get
\begin{equation*}
\phi''=\,f(\phi)\,\frac{|\phi'|}{2 g(2|\phi'|)+4N|\phi'|}.
\end{equation*}
We deduce then that $\phi''>0$ and
\begin{equation}\label{sup1}
\phi''\leq\frac{1}{4N}\,f(\phi).
\end{equation}
On the other hand,  by the monotonicity of $f$ and $g$ we have
\begin{equation*}
F(t)=\ds\int_0^tf(s)\,ds\leq f(t)t,
\end{equation*}
and
\begin{equation*}
\Gamma(t)\geq\int_t^{2t}g(s) ds+ 2N t^2\geq t g(t)+2N t^2.
\end{equation*}
Then
\begin{equation}\label{sup2}
 g(|\phi'|)+2 N |\phi'| \le \frac{\Gamma(|\phi'|)}{|\phi'|} = \frac{F(\phi)}{|\phi'|} \leq
  f(\phi) \frac{ \phi}{|\phi'|} \leq  \frac12 f(\phi).
\end{equation}
By adding \eqref{sup1} and \eqref{sup2} we obtain \eqref{fwer}, and the lemma is proved.
\end{proof}
\medskip

We now prove part (i) of Theorem \ref{teo22}.
Assume now that $u$ is a nontrivial  solution to  $(P_-)$ defined on the whole space. Let $\tilde u$ be a solution to  \eqref{cauchy2} with $\tilde u_0=\ds\frac{u(0)}{2}$.  Note that Proposition \ref{teo6} applies to $\tilde u$. In addition, $\tilde u'>0$, $\tilde u''\ge 0$ together with the equality satisfied by $\tilde u$ imply
$$
\tilde u^\prime(r)\le r(N-1)^{-1} f(\tilde u(r)).
$$
Hence if $\tilde u$ exists on some maximal interval $(0, \bar R)$ with $\bar R<\infty$, we have $\tilde u(r)\to \infty$ as $r\to R$. This implies that   $$u(x)< \tilde u(|x|)\quad\mbox{ on }\;\partial B(0,r_0),$$
 for some $r_0$ smaller than  and close to $\bar R$. Then the comparison principle applies in $B(0,r_0)$ and leads to a contradiction with $\tilde u(0)<u(0)$.

Therefore $\tilde u$ is globally defined and $\tilde u(r)\to \infty$ as $r\to \infty$, since $\tilde u$ is increasing and convex. We already know by the comparison principle that $$u(x)\not\le \tilde u(|x|)\quad\mbox{ on }\quad\partial B(0,r),$$ for all $r\in (0, \infty)$. This implies that there exists a sequence $x_n\in \mathbb{R}^N$ with $|x_n|\to \infty$, such that $u(x_n)\to \infty$. Fix $n_0$ so large that $u(x_{n_0})> \bar u_0$, where $\bar u_0$ is the number we obtained in Lemma \ref{kwak}. We then can repeat the argument from the previous paragraph, replacing $\tilde u(|x|)$ by $\bar u(|x-x_{n_0}|)$ ($\bar u$ is the function from Lemma \ref{kwak}), and reach a contradiction again.

\bigskip

\noindent {\bf Proof of Theorem \ref{teo22}, Part (ii)}. This proof is quite simple, we include it for completeness. Let $u$ be a solution of the problem \begin{equation}\label{cauchy22}
\left\{
\begin{array}{l}
 u''+\ds  \frac{N-1}ru'=f(u)- g(u'),\\[0.5pc]
u(0)= u_0>0,\ u'(0)=0.
\end{array}
\right.
\end{equation}
By Proposition \ref{teo6} we  get $ f(u)-g(u')\ge 0$, that is,
\begin{equation}\label{22}
u'\leq g^{-1}(f(u)).
\end{equation}
Using again Proposition \ref{teo6} and \re{condfg} we see that if $u$ is defined on a maximal interval $(0,R)$ with $R<\infty$ then  $u(r)\to \infty$ as $r\to R$.

By Proposition \ref{teo6} and \re{cauchy22} we also have
$$
u^{\prime\prime}\le f(u),  $$
which implies $\left((u^\prime)^2\right)^\prime \le 2 \left(F(u)\right)^\prime$, and hence
\begin{equation}\label{23}u^\prime \le \sqrt{2 F(u)}.\end{equation}

Finally, by integrating \re{22} and \re{23} we get
$$\max\left\{ \int_{u_0}^{u(r)}\frac{ds}{\sqrt{F(s)}}, \int_{u_0}^{u(r)}\frac{ds}{g^{-1}(f(s))}\right\}\leq \sqrt{2}  r,\qquad r\in (0,R).$$
Letting $r\to R$ we get a contradiction with the assumption of the theorem. Hence  u is  defined  on $\mathbb{R}^+$ and then
 $v(x)=u(|x|)$ is a solution of $(P_-)$. \hfill $\Box$

\medskip

In the end we give the proofs of Theorems \ref{genteo1} and \ref{genteo2}.
\medskip

\noindent {\bf Proof of Theorem \ref{genteo1}}. The first part of the theorem
is exactly Proposition \ref{pro}. For the second part we observe that in Lemma \ref{kwak} we constructed an increasing and convex supersolution $u$, and hence the function  $v(x)=u(|x|)$ is a supersolution for
$$
\mathcal{M}^+(D^2v)\le f(v)-g(|\nabla v|).
$$
Therefore we can repeat  the proof of Theorem \ref{teo22} (i) without any changes.\hfill $\Box$
\medskip

\noindent {\bf Proof of Theorem \ref{genteo2}}. We consider the initial value problem
\begin{equation}\label{geninit}
\left\{
\begin{array}{l}
u^{\prime\prime} = M\left(\ds -\frac{N-1}{r}m(u^\prime) + H(u,u^\prime)\right)\\
u(0)=u_0>0,\quad u^\prime(0) = 0,
\end{array}
\right.
\end{equation}
where the functions $M(s), m(s)$ are defined as follows
$$
M(s) = \left\{ \begin{array}{rcl} s&\mbox{ if }& s\ge0\\ s/\lambda&\mbox{ if }& s\le0,\end{array} \right.
\qquad
 m(s) = \left\{ \begin{array}{rcl}    s&\mbox{ if }& s\ge0\\ \lambda s&\mbox{ if }& s\le0. \end{array} \right.
 $$

Then it is easy to see, with the help of \re{proppucci}, that if $u(r)$ is a solution of \re{geninit} defined on $\mathbb{R}^+$ then $u(|x|)$ is a solution of \re{geneq1} (see for instance Section 2 in \cite{FQ}).

Next, we observe that the usual proof of Peano's theorem applies to the singular initial value problem
problem
\begin{equation}\label{genin}
\left\{
\begin{array}{l}
(r^{N-1}{\hat u}^\prime)^\prime =r^{N-1} H(\hat u,{\hat u}^\prime)\\
{\hat u}(0)=u_0>0,\quad {\hat u}^\prime(0) = 0,
\end{array}
\right.
\end{equation}
(which is \re{geninit} with $\lambda=1$) and permits to us to construct a solution of \re{genin} in a right neighborhood of zero. More precisely, the solution is obtained as a fixed point of a continuous map from a convex compact subset of a vector space into itself.

Furthermore, observe that the hypotheses on $H$ imply $H(u_0,0)>0$.  We see that letting $r\to 0$ in \re{geninit} and assuming ${\hat u}^{\prime\prime}(0)\le0$ gives ${\hat u}^{\prime\prime}(0)\ge H(u_0,0)$, a contradiction. Hence ${\hat u}^{\prime\prime}(0)>0$, which implies that ${\hat u}$ is a solution of \re{geninit} in some interval $(0,\delta)$, $\delta>0$. Then by applying again Peano's theorem to \re{geninit} we get a solution $u$ to this equation in some (maximal) interval $(0,R)$.

As in the proof of Proposition \ref{teo6} we see that $u^{\prime}(r)>0$ for all $r\in (0,R)$, and $u$ is strictly increasing.
Therefore if we are unable to extend $u$ in a right neighborhood of $R$, this may happen either because $u(r)\to \infty$ as $r\to R$ or because $u$ is bounded as $r\to R$ but  $u^\prime(r_n)\to \infty$ for some sequence $r_n\to R$. We claim that in the second case we actually have $u^\prime(r)\to \infty$ as $r\to R$. Indeed if this claim were not true, there would exist a strictly increasing sequence $s_n\to R$ and a fixed number $A>0$ such that $u^\prime(r)\in (0, A+1]$ for $r\in [s_{2k}, s_{2k+1}]$, $k\in \mathbb{N}$, as well as $u^\prime(s_{2k})= A$ and $u^\prime(s_{2k+1})= A+1$. Then the equation \re{geninit} implies that $u^{\prime\prime}$ is bounded in $[s_{2k}, s_{2k+1}]$ independently of $k\in \mathbb{N}$, which contradicts $u^\prime(s_{2k+1})-u^\prime(s_{2k})=1$ and $s_{2k+1}-s_{2k}\to 0$ as $k\to\infty$.

We thus conclude that if $u$ is not defined on $\mathbb{R}^+$ then there exists $R>0$ such that either $u(r)\to\infty$ or $u^\prime(r)\to\infty$ as $r\to R$. Observe that in the case when we assume an inequality with a minus sign in front of the gradient term we have
$$H(u,|\nabla u|)\le  f(u) - g(|\nabla u|)\le f(u),$$
so the equation \re{geninit} and $u^\prime >0$ easily imply $u^{\prime\prime}\le f(u)$, and it is easy to check that
we necessarily have $u(r)\to\infty$ as $r\to R$, by \re{23}.

To summarize, either the function $v(x)=u(|x|)$ we just constructed is an entire positive solution of \re{geneq1}, thus proving Theorem \ref{genteo2}, or $v$ is a supersolution of the inequality
$$
\mathcal{M}^+(D^2v) \le f(v)+  g(|\nabla v|)\qquad\mbox{resp. }\; \mathcal{M}^+(D^2v) \le f(v)-  g(|\nabla v|)
$$
and either $u(r)\to\infty$ as $r\to R$ (this necessarily holds in case of a minus sign) or $u^\prime(r)\to\infty$ as $r\to R$.

On the other hand, the hypotheses of Theorem \ref{genteo2} and Theorems \ref{teo32}-\ref{teo22} imply that the problem
$$
\Delta w = f(w)+  g(|\nabla w|)\qquad\mbox{resp. }\; \Delta w = f(w)-  g(|\nabla w|)
$$
has an entire solution, which is then a subsolution defined on the whole space of
$$
\mathcal{M}^+(D^2w) \ge f(w)+  g(|\nabla w|)\qquad\mbox{resp. }\; \mathcal{M}^+(D^2w) \ge f(w)-  g(|\nabla w|)
$$

With these functions $v$ and $w$ at hand, we can repeat almost verbatim the proof of Proposition \ref{pro} and get a contradiction.

Theorem \ref{genteo2} is proved. \hfill $\Box$

\bigskip

\noindent {\bf Acknowledgements.}

P.F. was  partially supported by Fondecyt Grant \# 1110291, as well as
 BASAL-CMM projects, CAPDE, Anillo ACT-125, and MathAmSud project.

A. Q. was partially supported by Fondecyt Grant \#  1110210 and Programa Basal, CMM. U. de Chile  and CAPDE, Anillo ACT-125.


\end{document}